\title{Optimal gradient continuity  for \\
 degenerate elliptic equations}
\author{\it by \smallskip \\
Dami\~ao J.  Ara\'ujo, \quad Gleydson Ricarte,  \quad   Eduardo V. Teixeira \footnote{Corresponding author.}}
\date{}
\newlength{\hchng}
\newlength{\vchng}
\def \p {\vec{{\mathrm{p}}}}
\def \q {\vec{{\mathrm{q}}}}
\newtheorem{theorem}{Theorem}[section]
\newtheorem{lemma}[theorem]{Lemma}
\newtheorem{proposition}[theorem]{Proposition}
\newtheorem{corollary}[theorem]{Corollary}
\theoremstyle{definition}
\theoremstyle{remark}
\numberwithin{equation}{section}
\newcommand{\intav}[1]{\mathchoice {\mathop{\vrule width 6pt height 3 pt depth  -2.5pt
\kern -8pt \intop}\nolimits_{\kern -6pt#1}} {\mathop{\vrule width
5pt height 3  pt depth -2.6pt \kern -6pt \intop}\nolimits_{#1}}
{\mathop{\vrule width 5pt height 3 pt depth -2.6pt \kern -6pt
\intop}\nolimits_{#1}} {\mathop{\vrule width 5pt height 3 pt depth
-2.6pt \kern -6pt \intop}\nolimits_{#1}}}
\begin{document}
\maketitle

\begin{abstract}
We establish new, optimal gradient continuity estimates for solutions to a class of 2nd order partial differential equations, $\mathscr{L}(X, \nabla u, D^2 u) = f$,  whose diffusion properties (ellipticity) degenerate along the \textit{a priori} unknown singular set of an existing solution, $\mathscr{S}(u) := \{ X : \nabla u(X) = 0 \}$. The innovative feature of our main result concerns its optimality -- the sharp, encoded smoothness aftereffects of the operator. Such a quantitative information usually plays a decisive role in the analysis of a number of analytic and geometric problems. Our result is new even for the classical equation $|\nabla u | \cdot \Delta u = 1$. We further apply these new estimates in the study of some well known problems in the theory of elliptic PDEs. 

\medskip

\noindent \textbf{Keywords:} Smoothness properties of solutions, optimal estimates, degenerate elliptic PDEs

\medskip

\noindent \textbf{AMS Subject Classifications:}
35B65, 35R35.


\end{abstract}

\section{Introduction}

Regularity theory for solutions to partial differential equations has been a central subject of research since the foundation of the modern analysF of PDEs, back in the 18th century. Of particular interest  are physical and social phenomena that involve diffusion processes, whose mathematical models are governed by second order elliptic PDEs.

\par

\medskip

Smoothness of weak solutions to 2nd order uniformly elliptic equations, both in divergence and in non-divergence forms, is nowadays fairly well established. The cornerstone of the theory is a \textit{universal} modulus of continuity for solutions to the homogeneous equation: $Lu = 0$. This is the contents of DeGiorgi-Nash-Moser theory for the divergence equations and Krylov-Safonov Harnack inequality for non-divergence operators.   

\par

\medskip

Despite of the profound importance of the supra-cited works, a large number of mathematical models involve operators whose ellipticity degenerates along an \textit{a priori} unknown region, that might depend on the solution itself: the free boundary of the problem. This fact impels less efficient diffusion features for the model near such a region and therefore the regularity theory for solutions to such equations become more sophisticated from the mathematical view point.

\par

\medskip

The most typical case of elliptic degeneracy occurs along the singular set of an existing solution:
$$
	\mathscr{S}(u) := \{ X : \nabla u(X) = 0 \}.
$$ 
In fact, a number of degenerate elliptic equations  has its degree of degeneracy comparable to
\begin{equation}\label{model1}
	f(\nabla u) |D^2 u| \approx 1,
\end{equation}
for some function $f\colon \mathbb{R}^d \to \mathbb{R}$, with $\text{Zero}(f) = \{0\}$.  Thus, understanding the precise effect on the lack of smoothness impelled by the emblematic model \eqref{model1} shades lights on  the underlying sharp regularity theory for a number of typical degenerate elliptic operators -- see the heuristic comments in Section \ref{sec statement}.

\par

\medskip

The main goal of this present work is to derive sharp interior regularity estimates for degenerate elliptic equations of the general form
\begin{equation}\label{Eq-Intro}
\mathcal{H}(X,\nabla u)F(X,D^2u)=f(X) , \quad B_1 \subset \mathbb{R}^d,
\end{equation}
where $f \in L^\infty(B_1)$ and  $\mathcal{H} \colon B_{1} \times \mathbb{R}^d \rightarrow \mathbb{R}$  degenerates as
\begin{equation}\label{(2)}
\lambda |\p|^{\gamma} \le \mathcal{H}(X,\p) \le \Lambda |\p|^{\gamma},
\end{equation}
for some  $\gamma >0$.  The $2^{\text{nd}}$ order operator $F\colon B_1\times \text{Sym}(d) \to \mathbb{R}$ in equation \eqref{Eq-Intro} is responsible for diffusion, i.e.,  $F$ will be assumed to be a generic fully nonlinear uniformly elliptic operator:  $\lambda \text{Id}_{d\times d} \le \partial_{i,j} F(X,M) \le \Lambda \lambda  \text{Id}_{d\times d}$. In Section \ref{sec preliminar} we give a more appropriate notion of ellipticity. 

\par

\medskip

Regularity theory for viscosity solutions to fully nonlinear uniformly elliptic equations, 
$$
	F(D^2u) = 0,
$$ 
has attracted the attention of the mathematical community for the last three decades or so. It is well established that solutions to the homogeneous equation is locally of class $C^{1,{\alpha_0}}$ for a universal exponent $\alpha_0$, i.e., depending only on $d$, $\lambda$ and $\Lambda$, see for instance \cite{CC}. If no additional structure is imposed on $F$, $C^{1,\alpha_0}$ is in fact optimal, see \cite{NV1}, \cite{NV2}, \cite{NV}.

\par

\medskip

A quick inference on the structure of equation \eqref{Eq-Intro} reveals that no universal regularity theory for such equation could go beyond $C^{1,\alpha_0}$. In fact the degeneracy term  $ \mathcal{H}(X,\nabla u)$  forces solutions to be less regular than solutions to the uniformly elliptic problem near its singular set.   This particular feature indicates that obtaining sharp regularity estimates for solutions to \eqref{Eq-Intro} should not follow from perturbation techniques. Indeed, it requires new ideas involving an interplay balance between the universal regularity theory for uniform elliptic equations and the degeneracy effect on the diffusion attributes of the operator coming from \eqref{(2)}.

\par

\medskip

In this present work we show that a viscosity solution, $u$,  to \eqref{Eq-Intro} is pointwise differentiable and its gradient, $\nabla u$, is locally of class $C^{0, \min\{\alpha_0^{-}, \frac{1}{1+\gamma}\}}$, which is precisely the optimal regularity for degenerate equations of the type \eqref{Eq-Intro}. We further estimate the corresponding maximum regularity norm of $u$ by a constant that depends only on universal parameters, $\gamma$, $\|f\|_\infty$ and $\|u\|_\infty$. Sharpness of our estimate can be verified by simple examples. We have postponed the precise statement of the main Theorem to Section \ref{sec statement}. We highlight that the result proven in this manuscript is new even for the classical family of degenerate equations 
\begin{equation}\label{eq_Lap_intro}
	|\nabla u|^\gamma \Delta u = 1, \quad \gamma >0.
\end{equation}

\par

\medskip

The key, innovative feature of our main result lies precisely in the optimality of the gradient H\"older continuity exponent of a solution to the degenerate equation \eqref{Eq-Intro}, which in turn is an important piece of information in a number of qualitative analysis of PDEs, such as blow-up analysis, free boundary problems, geometric estimates, etc. It is quantitative bonus acquisition to the recent result in \cite{IS}, where it is proven that viscosity solutions to \eqref{Eq-Intro} are continuously differentiable. The logistic reasoning of the proof of our main result is inspired by recent works of the third author, \cite{T0}, \cite{T1}, \cite{T2}, and it further uses the main \textit{crack} from \cite{IS} to access  \textit{a priori} $C^1$ estimate for solutions to \eqref{Eq-Intro}.

\par

\medskip

The paper is organized as follows. In Section \ref{sec preliminar} we gather the most relevant notations and known results we shall use in the paper. In Section \ref{sec statement} we present the main Theorem proven in this work. In Section \ref{sec applications} we provide a few implications the sharp estimates from Section \ref{sec statement} have towards the solvability of some well known open problems in the elliptic regularity theory. The proof of Theorem \ref{Teo-Princ} is delivered in the remaining Sections \ref{sec comp}, \ref{sec flat}, \ref{sec small} and \ref{sec fim}.

\section{Notation and preliminares} \label{sec preliminar}

In this article we use standard notation from classical literature. The equations and problems studied in this paper are modeld in the $d$-dimensional Euclidean space, $\mathbb{R}^d$. The open ball of radius $r>0$ centered at the point $X_0$ is denoted by $B_r(X_0)$. Usually ball of radius $r$, centered at the origin is written simply as $B_r$. For a function $u\colon B_1 \to \mathbb{R}$, we denote its gradient and its Hessian at a point $X \in B_1$ respectively by 
$$
		\nabla u(X) :=(\partial_j u)_{1\le j\le  d} \quad \text{ and } \quad D^2u(X) := (\partial_{ij}u)_{1\le i, j \le d},
$$
where $\partial_j u$ and $\partial_{ij} u$ denote the $j$-th directional derivative of $u$ and the $i$-th directional derivative of $\partial_j u$, respectively.

The space of all $d\times d$ symmetric matrices is denoted by $\text{Sym}(d)$. An operator $F \colon B_{1} \times \text{Sym}(d) \rightarrow \mathbb{R}$  is said to be uniformly elliptic if there there exist two positive constants $0 < \lambda \le \Lambda$ such that, for any $M \in \text{Sym}(d)$ and $X \in B_1$,
\begin{equation}  \label{unif ellip}
    \lambda \|P\| \le F(X, M+P) - F(X, M) \le \Lambda \|P\|, \quad \forall P \ge 0.
\end{equation}

Any operator $F$ satisfying the ellipticity condition \eqref{unif ellip} will be referred hereafter in this paper as a  $(\lambda, \Lambda)$-elliptic operator. Also, following classical terminology, any constant or entity that depends only on dimension and the ellipticity parameters $\lambda$ and $\Lambda$ will be called \textit{universal}. For normalization purposes, we assume, with no loss of generality, throughout the text that $F(X, 0) = 0,  ~ \forall X \in B_1.$

For an operator $G\colon B_1 \times \mathbb{R}^d \times \text{Sym}(d) \to \mathbb{R}$, we say a function $u \in C^0(B_1)$ is a viscosity super-solution to $G(X, \nabla u, D^2u) = 0$, if whenever we touch the graph of $u$ by below at a point $Y \in B_1$ by a smooth function $\varphi$, there holds $G(Y, \nabla \varphi(Y), D^2 \varphi (Y)) \le  0$. We say $u \in C^0(B_1)$ is a viscosity sub-solution to $G(X, \nabla u, D^2u) = 0$, if whenever we touch the graph of $u$ by above at a point $Z \in B_1$ by a smooth function $\phi$, there holds $G(Y, \nabla \varphi(Y), D^2 \varphi (Y)) \ge  0$. We say $u$ is a viscosity solution if it is a viscosity super-solution and a viscosity sub-solution. The crucial observation  on the above definition is that if $G$ is non-decreasing on $M$ with respect to the partial order of symmetric matrices, then the classical notion of solution, sub-solution and super-solution is equivalent to the corresponding viscosity terms, provided the function is of class $C^2$. The theory of viscosity solutions to non-linear 2nd order PDEs is nowadays fairly well established. We refer the readers to the classical article \cite{user guide}. 

\par

Let us discuss now a little bit further the existing regularity theory for uniformly elliptic equations. As mentioned earlier in the Introduction, it follows from the celebrated Krylov-Safonov Harnack inequality, see for instance \cite{CC}, that any viscosity solution to the constant coefficient, homogeneous equation
\begin{equation}\label{hom eq}
	F(D^2h) = 0,
\end{equation}
is locally of class $C^{1,\alpha_0}$ for a universal exponent $0< \alpha_0 < 1$. Hereafter in this paper, $\alpha_0 = \alpha_0(d, \lambda, \Lambda)$ will always denote the optimal H\"older continuity exponent for solutions constant coefficients, homogeneous, $(\lambda, \Lambda)$-elliptic equation \eqref{hom eq}.
If no extra structural condition is imposed on $F$, $C^{1,\alpha_0}_\text{loc}$  is indeed the optimal regularity possible, \cite{NV1}, \cite{NV2}, \cite{NV}. However, under convexity or concavity assumption on $F$, solutions are of class $C^2$. This is a celebrated result due to Evans \cite{Ev} and Krylov \cite{K1, K2}, independently. 

\par

For varying coefficient equations, solutions are in general only $C_\text{loc}^{0, \alpha_0}$ and this is the optimal regularity available, unless we impose some continuity assumption on the coefficients, i.e., on the map $X \mapsto F(X, \cdot)$,  Such condition is quite natural and it is present even in the linear theory:  $Lu := a_{ij}(X) D_{ij} u$.  Since we aim for a universal $C^{1,\alpha}$ estimate for solutions to equation \eqref{Eq-Intro}, hereafter we shall assume a uniform continuity assumption on the coefficients of $F$, appearing in \eqref{Eq-Intro}, namely
\begin{equation}  \label{continuity}
	\sup\limits_{\|M\| \le 1} \frac{|F(X, M) - F(Y, M)|}{\|M\|} \le C \omega(|X-Y|),
\end{equation}
where $C\ge 0$ is a positive constant and $\omega$ is a normalized modulus of continuity, i.e., $\omega \colon \mathbb{R}_+ \to \mathbb{R}_+$ is increasing, $\omega(0^{+}) = 0$ and $\omega(1) = 1$.  Such condition could be relaxed: it suffices some sort of VMO condition, see \cite{C1}. We have decided to present the results of this present article under \eqref{continuity} for sake of simplicity. For notation convenience, we will call 
\begin{equation}\label{w-norm}
	\|F\|_\omega := \inf \left \{ C > 0 ~ : ~ \sup\limits_{\|M\| \le 1} \frac{|F(X, M) - F(Y, M)|}{\|M\|} \le C \omega(|X-Y|), ~ \forall X, Y \in B_1 \right \}.
\end{equation}

We close this Section by mentioning that under continuity condition on the coefficients, viscosity solutions to 
$$
	F(X, D^2u) = f(X) \in L^\infty(B_1)
$$
are locally of class $C^{1,\beta}$, for any $0< \beta < \alpha_0$, where $\alpha_0$ is the optimal H\"older exponent for solutions to constant coefficient, homogeneous equation $F(D^2h) = 0$, coming from Krylov-Safonov, Caffarelli universal regularity theory. See \cite{C1}, \cite{T1}. 
\par

\section{Main results} \label{sec statement}

In this Section, we shall present the main result we will prove in this present work. As mentioned earlier, the principal, ultimate goal of this article is to understand the sharp smoothness estimates for functions $u$, satisfying
\begin{equation}\label{Eq-mod02}
	|\nabla u|^{\delta}  \cdot |F(D^2u)| \lesssim 1, \quad \delta > 0,
\end{equation}
in viscosity sense, for some uniformly elliptic operator $F$. Clearly, as commented in the previous Section, even in the non-degenerate case, $\delta = 0$, the best regularity possible is $C_\text{loc}^{1+\alpha_0^{-}}$. The delicate point, though, is to obtain a universal estimate, fine enough as to sense and deem the singularity appearing in RHS of Equation \eqref{Eq-mod02}, along the singular set $(\nabla u)^{-1}(0)$, as $\delta>0$ varies.  

\par

As to grasp some feelings on what one should expect, let us na\"ively look at the ODE
$$
	u''(t) = (u')^{-\delta}, \quad u(0) = u'(0) = 0,
$$ 
which can be simply solved for $t \in (0, \infty)$. The solution is $u(t) = t^{\frac{2+\delta}{1+\delta}}$. After some heuristics inference, it becomes reasonable to accept that $C^{{\frac{2+\delta}{1+\delta}}}$ is another upper barrier for any universal regularity estimate for Equation \eqref{Eq-mod02}. Thus, if no further obscure complexity interferes on the elliptic regularity theory for fully nonlinear degenerate elliptic equation, the ideal optimal regularity estimate one should hope for functions satisfying Equation \eqref{Eq-mod02} should be $C^{1, \min\{\alpha_0^{-}, \frac{1}{1+\delta}\}}$. 

After these technical free and didactical comments, we are able to state the main result we establish in this paper.  
\begin{theorem}\label{Teo-Princ}
Let $u$ be a viscosity solutions to
\begin{equation}\label{(3)}
    \mathcal{H}(X,\nabla u)F(X,D^2u)=f(X) \quad \textrm{in} \quad B_1.
\end{equation}
Assume $f\in L^\infty(B_1)$,  $\mathcal{H}$ satisfies \eqref{(2)} and $F\colon B_1 \times \text{Sym}(d) \to \mathbb{R}$ is uniform elliptic with continuous coefficients, i.e., satisfying \eqref{continuity}. Fixed an exponent
$$
	\alpha \in (0, \alpha_0) \cap \left( 0, \frac{1}{1+\gamma}\right],
$$
there exists a constant $C(d, \lambda, \Lambda, \gamma, \|F\|_\omega,  \|f\|_\infty, \alpha)>0$, depending only on $d, \lambda, \Lambda, \gamma, \|F\|_\omega, \|f\|_\infty$ and $ \alpha$,  such that 
$$
    \|u\|_{C^{1,\alpha} (B_{1/2})} \le C(d, \lambda, \Lambda, \gamma, \|F\|_\omega, \|f\|_\infty, \alpha)  \cdot \|u\|_{L^{\infty}}.
$$
\end{theorem}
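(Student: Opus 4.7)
The plan is to follow the iterative flatness-improvement scheme of \cite{T0,T1,T2}, using the $C^1$ regularity result of \cite{IS} to supply the compactness needed at the base step. After a preliminary rescaling $u(X)\mapsto \kappa u(rX)$ with $\kappa,r$ depending on $\|u\|_\infty$, $\|f\|_\infty$, and $\|F\|_\omega$, I would reduce to proving a pointwise $C^{1,\alpha}$ estimate at the origin under the small-data normalization $\|u\|_\infty\le 1$, $\|f\|_\infty\le\epsilon$, $\|F\|_\omega\le\epsilon$, with $\epsilon>0$ a small universal constant to be chosen; translating the base point then gives the full estimate in $B_{1/2}$.

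First I would establish an approximation lemma: for every $\delta>0$ there exists $\epsilon>0$ such that any normalized viscosity solution $u$ admits some $h$ with $F(0,D^2h)=0$ in $B_{3/4}$ satisfying $\|u-h\|_{L^\infty(B_{3/4})}\le\delta$. This is proved by contradiction and compactness; a putative bad sequence $u_k$ enjoys uniform $C^1_{\mathrm{loc}}$ bounds thanks to \cite{IS}, so along a subsequence $u_k\to u_\infty$ locally uniformly. The stability of viscosity solutions, together with a case analysis on whether the test-function gradient at the touching point vanishes (in which case the degeneracy factor $\mathcal H$ absorbs the difficulty via the upper bound in \eqref{(2)}) or not (in which case one divides the equation by $\mathcal H$ before passing to the limit), then forces $u_\infty$ to solve $F(0,D^2u_\infty)=0$, a contradiction. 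Coupling the lemma with the $C^{1,\alpha_0}$ estimate for $F(0,\cdot)$ gives the one-step flatness improvement: for the target $\alpha\in(0,\alpha_0)\cap(0,(1+\gamma)^{-1}]$, choose $\rho$ small so that $C\rho^{1+\alpha_0}\le\tfrac12\rho^{1+\alpha}$ (possible since $\alpha<\alpha_0$), apply the lemma with $\delta=\tfrac12\rho^{1+\alpha}$, and take $L(X)=h(0)+\nabla h(0)\cdot X$. This produces $\sup_{B_\rho}|u-L|\le\rho^{1+\alpha}$ with $|\nabla L|$ universally bounded.

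To iterate, I would rescale via
\[
v(Y):=\frac{u(\rho Y)-L(\rho Y)}{\rho^{1+\alpha}},\qquad Y\in B_1,
\]
and check that $v$ solves $\widetilde{\mathcal H}(Y,\nabla v)\,\widetilde F(Y,D^2 v)=\tilde f(Y)$, where $\widetilde F(Y,M):=\rho^{1-\alpha}F(\rho Y,\rho^{\alpha-1}M)$ is still $(\lambda,\Lambda)$-elliptic with the same modulus $\omega$, $\tilde f(Y):=\rho^{1-\alpha(1+\gamma)}f(\rho Y)$, and $\widetilde{\mathcal H}(Y,p):=\rho^{-\alpha\gamma}\mathcal H(\rho Y,\nabla L+\rho^\alpha p)$. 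The crucial bound $\|\tilde f\|_\infty\le\|f\|_\infty$ holds precisely because $\alpha\le 1/(1+\gamma)$, and this is exactly where the sharp exponent in the statement enters. A recursive application of the flatness improvement to the $v_k$ arising at each scale produces affine functions $L_k$ with $\sup_{B_{\rho^k}}|u-L_k|\le\rho^{k(1+\alpha)}$; a standard telescoping argument then yields a unique tangent plane at $0$ and the required pointwise $C^{1,\alpha}$ control.

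The principal obstacle I foresee is iterating coherently, because the rescaled degeneracy $\widetilde{\mathcal H}(Y,p)\asymp\rho^{-\alpha\gamma}|\nabla L+\rho^\alpha p|^\gamma$ is not literally of the form postulated in \eqref{(2)}: the singularity has been shifted from $p=0$ to $p=-\rho^{-\alpha}\nabla L$. I would handle this by a dichotomy at every step: either $|\nabla L_k|\lesssim\rho^{k\alpha}$ for all $k$, so the shifted degeneracy remains comparable to a pure $|p|^\gamma$ degeneracy up to universal constants and the scheme persists intact; or at some scale $|\nabla L_k|\gg\rho^{k\alpha}$, meaning $u$ admits a genuinely nondegenerate tangent plane there, the equation becomes effectively uniformly elliptic in a neighborhood of the base point, and the classical $C^{1,\beta}$ regularity theory of \cite{C1} delivers the estimate for any $\beta<\alpha_0$, in particular for $\beta=\alpha$. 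Balancing these two regimes is the technical heart of the argument, and the threshold exponent $\min\{\alpha_0^-,(1+\gamma)^{-1}\}$ emerges as sharp precisely from the compatibility constraint on the rescaled right-hand side recorded above.
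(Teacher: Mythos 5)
Your overall scheme — rescale to a small–data regime, prove an approximation lemma by compactness using the Imbert–Silvestre $C^1$ estimates, derive a one-step flatness improvement, and iterate — matches the paper's architecture closely. The critical divergence, and where a genuine gap appears, is in how you handle the shift of the degeneracy after rescaling.

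The paper does \emph{not} prove the approximation lemma for the equation $\mathcal{H}(X,\nabla u)F(X,D^2u)=f$ alone; it proves it for the whole family
$$
	|\q + \nabla u|^{\gamma}\,F(X,D^2u)=f(X),\qquad \q\in\mathbb{R}^d\ \text{arbitrary},
$$
with constants $\rho_0,\epsilon_0$ \emph{uniform in $\q$}. That is exactly the equation your rescaled $v_k$ satisfies (with $\q_k = \rho^{-k\alpha}\vec{b}_k$), so no dichotomy is needed at each step: Lemma~\ref{key 1} applies verbatim no matter what $\q_k$ is, and the iteration closes. The $\q$-uniformity is secured inside the compactness lemma itself by splitting into $|\q_j|$ bounded (where the degenerate Harnack inequality of Imbert gives $C^{0,\beta}$ compactness and the cutting-lemma argument of Imbert--Silvestre identifies the limit as a solution of a constant-coefficient $(\lambda,\Lambda)$-elliptic equation) and $|\q_j|\to\infty$ (where Lemma~4 of Imbert--Silvestre gives a universal Lipschitz bound, one divides by $|\q_j|^\gamma$, and the limit becomes uniformly elliptic).

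Your two-regime substitute does not quite close. In the ``degenerate'' regime $|\nabla L_k|\lesssim\rho^{k\alpha}$ you assert that the shifted factor $|\q_k+p|^\gamma$ with $|\q_k|\lesssim1$ is ``comparable to $|p|^\gamma$ up to universal constants''; this is false pointwise (take $p=0$, $\q_k\neq0$), so the approximation lemma as you stated it (for $\q=0$ only) does not cover this case. One would have to re-prove it for bounded $\q$, which is precisely what the paper's formulation does. In the ``nondegenerate'' regime $|\nabla L_k|\gg\rho^{k\alpha}$ you invoke Caffarelli's $C^{1,\beta}$ theory as if the equation were now uniformly elliptic, but that requires knowing that $|\q_k+\nabla v_k|$ stays bounded away from zero across the whole ball, i.e.\ an \emph{a priori} gradient bound on $v_k$ — which is exactly the estimate you are trying to prove, so the argument is circular. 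The missing ingredient that breaks the circle is the $\q$-uniform Lipschitz estimate for large $|\q|$ from \cite{IS}, which the paper folds into the compactness lemma. Replacing your two-regime patch by an approximation lemma stated for arbitrary $\q$ (with the $|\q|$-dichotomy handled once inside its proof, as in Lemma~\ref{univ-comp}) repairs the argument and recovers the paper's proof.
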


\par

An important consequence of Theorem \ref{Teo-Princ} is the following:

\begin{corollary} \label{cor} Let $u$ be a viscosity solutions to
\begin{equation}\label{(4)}
    \mathcal{H}(X,\nabla u)F(D^2u)=f(X) \quad \textrm{in} \quad B_1.
\end{equation}
Assume $f\in L^\infty(B_1)$, $\mathcal{H}$ satisfies \eqref{(2)}, $F$ is uniformly elliptic and concave.  Then $u$ is locally in $C^{1,\frac{1}{1+\gamma}}$ and this regularity is optimal.
\end{corollary}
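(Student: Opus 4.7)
The plan is to deduce Corollary \ref{cor} by combining Theorem \ref{Teo-Princ} with the improved regularity for the homogeneous equation provided by Evans--Krylov. Looking at Theorem \ref{Teo-Princ}, the admissible range of exponents is $\alpha \in (0,\alpha_0)\cap(0,\tfrac{1}{1+\gamma}]$, where $\alpha_0$ is the Krylov--Safonov--Caffarelli exponent for $F(D^2 h)=0$. In the present setting $F$ is constant-coefficient and concave, so Evans--Krylov yields an interior $C^{2,\sigma}$ estimate, and in particular a universal $C^{1,1}$ bound, for any such $h$. The first step, therefore, is to retrace the proof of Theorem \ref{Teo-Princ} and verify that the exponent $\alpha_0$ enters \emph{only} through the H\"older regularity of the gradient of the tangent homogeneous profile used in the approximation step. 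Granting this transparency, the argument goes through with $\alpha_0$ replaced by any number $\alpha_0'<1$ of our choice.

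Since $\gamma>0$ implies $\tfrac{1}{1+\gamma}<1$, we may pick $\alpha_0' \in (\tfrac{1}{1+\gamma},1)$; the intersection $(0,\alpha_0')\cap(0,\tfrac{1}{1+\gamma}]$ then \emph{contains} the endpoint $\alpha=\tfrac{1}{1+\gamma}$, and the (modified) conclusion of Theorem \ref{Teo-Princ} delivers $u \in C^{1,1/(1+\gamma)}_{\mathrm{loc}}$ with the corresponding universal estimate depending only on $d,\lambda,\Lambda,\gamma,\|f\|_\infty$ and $\|u\|_\infty$.

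Sharpness is exhibited by an explicit one-variable profile. Setting
\[
u(x) \, := \, c_\gamma\,|x_1|^{\frac{2+\gamma}{1+\gamma}},
\qquad
c_\gamma \, := \, \Bigl[\tfrac{(1+\gamma)^{2+\gamma}}{(2+\gamma)^{1+\gamma}}\Bigr]^{\frac{1}{1+\gamma}},
\]
a direct computation shows that $u$ solves $|\nabla u|^\gamma \Delta u = 1$ classically away from the hyperplane $\{x_1=0\}$, where $\nabla u$ vanishes. Since $\Delta$ is both concave and uniformly elliptic, the example lies in the class covered by the corollary. Because the exponent $\tfrac{2+\gamma}{1+\gamma}$ is strictly less than $2$ for $\gamma>0$, no quadratic polynomial (and hence no smooth test function) can touch $u$ from above on $\{x_1=0\}$, so the only non-trivial viscosity inequality is satisfied vacuously. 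As $\partial_1 u(x)$ is comparable to $|x_1|^{1/(1+\gamma)}$ near the origin, the gradient of $u$ belongs to $C^{0,1/(1+\gamma)}$ and to no higher H\"older class, which establishes optimality.

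The main obstacle is the first step above: one must verify, once the proof of Theorem \ref{Teo-Princ} is on the table, that the homogeneous regularity is used only through its H\"older exponent in the iterative flatness/approximation scheme, with all other constants remaining universal. Under the concavity hypothesis, this upgrade from $C^{1,\alpha_0}$ to $C^{1,1}$ for the tangent profile is precisely what allows the geometric decay to be iterated at the borderline exponent $\tfrac{1}{1+\gamma}$ itself, rather than merely below it.
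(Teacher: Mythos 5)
Your proposal is correct and follows the paper's own one-line argument: since $F$ is constant-coefficient and concave, Evans--Krylov upgrades the tangent homogeneous profile in Lemma \ref{univ-comp} from $C^{1,\alpha_0}$ to $C^{1,1}$, so the restriction $\alpha<\alpha_0$ in Theorem \ref{Teo-Princ} no longer bites against $\alpha\le\tfrac{1}{1+\gamma}$, and the endpoint exponent is reached. Your explicit radial-in-one-variable barrier $c_\gamma|x_1|^{(2+\gamma)/(1+\gamma)}$ correctly verifies optimality (which the paper only asserts); the one step you flag as ``to be verified'' --- that $\alpha_0$ enters the iteration solely through estimate \eqref{uf-eq0} --- is indeed routine, noting also that the rescaled operators $F_k(M)=\rho_0^{k(1-\alpha)}F\bigl(\rho_0^{-k(1-\alpha)}M\bigr)$ and hence the limiting $\mathfrak{F}$ in Lemma \ref{univ-comp} inherit concavity from $F$, so Evans--Krylov genuinely applies to every approximating profile generated along the iteration.
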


Corollary \ref{cor} follows from Theorem \ref{Teo-Princ} since solutions to concave equations are locally of class $C^{1,1}$ by Evans-Krylov Theorem.

\par \medskip

It is interesting to understand Theorem \ref{Teo-Princ} as a model classification for degenerate elliptic equations, linking the magnitude of the degeneracy of the operator to the optimal regularity of solutions. A quantitative, intrinsic signature of the degeneracy properties of the equation. In fact, as mentioned earlier, many classical equations have their degree of degeneracy comparable to a model equation of the form $|\nabla u|^\gamma |F(D^2u)| \lesssim 1$. We shall explore this perspective within the next Section.

\section{Applications and further insights} \label{sec applications}

The heuristics from the ``degeneracy classification" mentioned in the previous paragraph has indeed a wide range of applicability.  In this intermediary Section we comment on some consequences the optimal regularity estimates stated in Section \ref{sec statement} have in the elliptic regularity theory.  

\par
\medskip

 In the sequel we shall use Theorem \ref{Teo-Princ} and Corollary \ref{cor} to solve particular cases of  some well known open problems. The results provided in this section give hope that decisive progress can be attempted for the general cases in the near future.

\subsection{Equations from the theory of superconductivity} 

\par
\medskip

We start off by commenting on some applications Theorem \ref{Teo-Princ} has to the theory of superconductivity, where fully nonlinear equations with patches of zero gradient
\begin{equation} \label{eq CS}
	F(X, D^2u)  =  g(X,u) \chi_{\{|\nabla u| > 0 \}}
\end{equation}
governs the mathematical models. Equation \eqref{eq CS} represents the stationary equation for the mean field theory of superconducting vortices  when the scalar stream function admits a functional dependence on the scalar magnetic potential, see \cite{Chap}. Existence and regularity properties of Equation \eqref{eq CS} were studied in \cite{CS} and in \cite{CSS}. The novelty to study Equation \eqref{eq CS} is that one tests the equation only for touching polynomials for which $|\nabla P(X_0)| \not = 0$. It is proven in \cite{CS}, Corollary 7, that solutions are locally $C^{0,\alpha}$ for some $0<\alpha<1$. For concave operators, it is proven, see \cite{CS} Corollary 8, that solutions are in $W^{2,p}$. An application of Alt-Caffarelli-Friedman monotonicity formula, \cite{CS} Lemma 9, gives regularity $C^{1,1}$ for the particular problem 
\begin{equation} \label{eq CS part}
	\Delta u = cu \ \chi_{\{|\nabla u| > 0 \}}.
\end{equation} 

\par
\medskip

Equation \eqref{eq CS} can be obtained as a limiting problem, as $\delta \to 0$, for the family of singular equations
\begin{equation} \tag{$E_\delta$} \label{eq CS delta}
	|\nabla u_\delta|^{\delta} \cdot F(X, D^2u_\delta)  =  g(X,u_\delta), \quad B_1.
\end{equation}
Indeed, it follows from Theorem \ref{Teo-Princ} that if $u_\delta$ is a normalized solution to \eqref{eq CS delta}, for $\delta$  small enough, i.e., for
$$
	\delta < 1 - \alpha_0^{-},
$$ 
then we can estimate
\begin{equation}\label{SC compact} 
	\|u\|_{C_\text{loc}^{1,\alpha_0^{-}}}ƒ \le C,
\end{equation}
for a constant $C>1$, that does not depend on $\delta$. In particular, estimate \eqref{SC compact} gives local compactness for the family of solutions $\{u_\delta\}_{\delta > 0}$ to \eqref{eq CS delta}. Let $u_0$ be a limiting point of such a sequence, i.e.
$$
	u_0 = \lim\limits_{j\to 0} u_{\delta_j},
$$
for  $\delta_j = \text{o}(1)$. From \eqref{SC compact}, we have,
\begin{eqnarray}
		\nabla u_{\delta_j} &\longrightarrow& \nabla u_0 \quad \text{locally uniformly,} \label{SC conv grad}\\
		u_0 &\in& C_\text{loc}^{1,\alpha_0^{-}}(B_1) \label{SC reg}.
\end{eqnarray}
Now, fixed a regular point $Z \in B_1$ of  $u_0$, i.e.,
$$
	|\nabla u_0(Z)| > 0.
$$
Gradient convergence \eqref{SC conv grad} and estimate  \eqref{SC reg} yield the existence of a small $\eta>0$, such that
$$
	\inf\limits_{B_\eta(Z)} |\nabla u_\delta| \ge \dfrac{1}{10} |\nabla u_0(Z)| =: c_0,
$$  
for all $\delta \ll 1$. Thus,
$$
	g(X, u_\delta) \cdot |\nabla u_\delta|^{-\delta} \longrightarrow g(X, u_0), \quad \text{ uniformly in } B_\eta(Z).
$$
We sum up the above discussion as the following Theorem:
\begin{theorem}\label{thm SC theory} Let $u_\delta \in C^{0}(B_1)$ be a viscosity solution to \eqref{eq CS delta}, with $|u_\delta|\le 1$, $\delta \ll 1$, where $g$ is continuous w.r.t. $u$ and measurable bounded w.r.t. $X$. Assume the operator $F$ is under the hypotheses of Theorem \ref{Teo-Princ}. Then, fixed a number $\alpha < \alpha_0(d, \lambda, \Lambda)$, for $\delta$ small enough, we have
$$
	\|u_\delta\|_{C^{1,\alpha}(B_{4/5})} \le C(d, \lambda, \Lambda, \alpha).
$$
In particular, 
$$
	u_\delta \to u_0 \in C^{1,\alpha}(B_{1/2}),
$$
and $u_0$ is a viscosity solution to \eqref{eq CS}.
\end{theorem}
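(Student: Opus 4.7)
The plan is to read the family $\{u_\delta\}$ as solutions of equations falling under the scope of Theorem \ref{Teo-Princ}, extract uniform $C^{1,\alpha}$ estimates, and then justify the passage to the limit in the viscosity sense prescribed by \cite{CS}. Concretely, each $u_\delta$ solves $\mathcal{H}_\delta(X,\nabla u_\delta)\, F(X,D^2u_\delta) = f_\delta(X)$ with $\mathcal{H}_\delta(X,\vec{\mathrm p}) := |\vec{\mathrm p}|^\delta$ (so \eqref{(2)} holds with $\gamma = \delta$) and $f_\delta(X) := g(X,u_\delta(X))$, which is measurable and bounded by $\|g\|_\infty$, independently of $\delta$. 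Given $\alpha < \alpha_0$, as soon as $\delta$ is small enough that $\alpha \le \frac{1}{1+\delta}$, Theorem \ref{Teo-Princ} yields
\[
    \|u_\delta\|_{C^{1,\alpha}(B_{4/5})} \le C(d,\lambda,\Lambda,\|F\|_\omega,\|g\|_\infty,\alpha),
\]
and the constant stays bounded as $\delta \to 0$ because the $\gamma$-dependence in Theorem \ref{Teo-Princ} is continuous in $\gamma = \delta$.

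Second, this uniform estimate together with Arzelà--Ascoli implies that $\{u_\delta\}_{\delta \ll 1}$ is precompact in $C^{1,\alpha'}_{\mathrm{loc}}(B_{4/5})$ for every $\alpha' < \alpha$. Extracting a subsequence $\delta_j \to 0$, I would let $u_0 := \lim u_{\delta_j}$, which lies in $C^{1,\alpha}(B_{1/2})$ and satisfies $\nabla u_{\delta_j} \to \nabla u_0$ locally uniformly. Both the $C^{1,\alpha}$ estimate and this convergence are then in hand.

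Third, I need to verify that $u_0$ solves \eqref{eq CS} in the sense of \cite{CS}, i.e., the viscosity inequalities are tested only at points where the touching polynomial $P$ satisfies $|\nabla P(X_0)| \ne 0$. Suppose $P$ touches $u_0$ from above at $X_0 \in B_{1/2}$ with $|\nabla P(X_0)|>0$. The standard stability argument for viscosity solutions produces, along a subsequence, points $X_j \to X_0$ and small perturbations $P_j \to P$ in $C^2$ near $X_0$ such that $P_j$ touches $u_{\delta_j}$ from above at $X_j$. Since $u_{\delta_j}$ is a viscosity sub-solution of \eqref{eq CS delta},
\[
    |\nabla P_j(X_j)|^{\delta_j}\, F(X_j, D^2 P_j(X_j)) \;\ge\; g(X_j, u_{\delta_j}(X_j)).
\]
Because $|\nabla P_j(X_j)| \to |\nabla P(X_0)| > 0$, the degeneracy factor $|\nabla P_j(X_j)|^{\delta_j}$ tends to $1$, and by continuity of $F$ and $g$ the inequality $F(X_0, D^2 P(X_0)) \ge g(X_0, u_0(X_0))$ follows in the limit. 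As $u_0 \in C^1$ and $P$ touches from above, $\nabla u_0(X_0) = \nabla P(X_0)$, hence $|\nabla u_0(X_0)|>0$ and $g(X_0,u_0(X_0))\chi_{\{|\nabla u_0|>0\}}(X_0) = g(X_0,u_0(X_0))$, exactly the required sub-solution condition. A symmetric argument handles touching from below.

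The main substance of the proof is concentrated in the first step -- having Theorem \ref{Teo-Princ} with a constant that is explicit in $\gamma$ is what prevents the estimate from degenerating as $\delta \to 0$. The only genuinely delicate point beyond that is the observation, in the third step, that the hypothesis $|\nabla P(X_0)|\ne 0$ at the testing point is precisely what makes the factor $|\nabla P_j(X_j)|^{\delta_j}$ harmless in the limit; at points where $\nabla u_0$ vanishes we impose no condition, in complete agreement with the Caffarelli--Salsa notion of solution for \eqref{eq CS}.
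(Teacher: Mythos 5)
Your proposal is correct and follows the same route as the paper: cast each $u_\delta$ as a solution of $|\nabla u_\delta|^\delta F(X,D^2u_\delta)=g(X,u_\delta)$ with right-hand side bounded uniformly by $\|g\|_\infty$, invoke Theorem \ref{Teo-Princ} with $\gamma=\delta$ once $\delta$ is small enough that $\alpha\le\tfrac{1}{1+\delta}$, and pass to the limit using the fact that the degeneracy factor tends to $1$ wherever the touching gradient does not vanish. Your third step is a slightly more explicit rendition of what the paper does by dividing the equation by $|\nabla u_\delta|^\delta$ near a regular point of $u_0$; the one claim both you and the paper assert without detailed proof is that the constant in Theorem \ref{Teo-Princ} remains bounded as $\gamma\to 0$, which would require tracking the $\gamma$-dependence through Lemma \ref{univ-comp}.
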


The advantage of Theorem \ref{thm SC theory}, in comparison to the regularity theory developed in \cite{CS} is that it provides the asymptotically sharp $C^{1,\alpha}$ estimate in the general case of fully nonlinear operators, not necessarily concave.

\subsection{Visitng the theory of $\infty$-laplacian}

Let us now visit the theory of the $\infty$-laplacian operator, i.e., 
\begin{equation}\label{infty-lap}
		\displaystyle \Delta_\infty v :=  \displaystyle \sum\limits_{i,j} v_i v_j v_{ij}
\end{equation} 
which is related to the problem of best Lipschitz extension to a given boundary datum - a nonlinear and highly degenerate elliptic operator. The theory of infinity-harmonic functions, i.e., solutions to the homogeneous PDE
$$
	\Delta_\infty h = 0,
$$
has received a great deal of attention. One of the main open problems in the modern theory of PDEs is whether infinity-harmonic functions are of class $C^1$. This conjecture has been answered positively by O. Savin \cite{Savin} in the plane. Evans and Savin, \cite{Evans_Savin} sharpened the result to $C^{1,\alpha}$ for some small $ \alpha > 0$, but still in only dimension two. Quite recently, Evans and Smart proved that infinity-harmonic functions are everywhere differentiable regardless the dimension,  \cite{ES}. Nevertheless, no continuity feature of $\nabla u$ can be inferred by their ingenuous reasoning. The famous example of the infinity-harmonic function 
\begin{equation} \label{aronsson}
	a(x,y) := x^{\frac{4}{3}} - y^{\frac{4}{3}}
\end{equation}
due to Aronsson from the late 60's sets the ideal optimal regularity theory for such a problem. That is, no universal regularity theory for infinity harmonic functions can go beyond $C^{1,\frac{1}{3}}$. Up to our knowledge, there has been no prior meaningful mathematical indication that infinity-harmonic functions should or should not have a universal $C^{1,\frac{1}{3}}$ regularity theory, other than speculation based on Aronsson's example. Another way, though, to surmise the $C^{1,\frac{1}{3}}$ conjecture for infinity harmonic function would be by exploring the scaling properties of the equation. For instance, it one writes the infinity-laplacian as
$$
	\Delta_\infty v = (\nabla v)^t\cdot D^2 v \cdot \nabla v,
$$ 
it becomes tempting to compare its degeneracy feature with 
\begin{equation}\label{grad2hess}
	|\nabla u|^2 \cdot |\Delta u|   \lesssim 1,
\end{equation}
that has the same scaling properties as $\Delta_\infty$ and whose solutions are locally $C^{1,\frac{1}{1+2}}$ regular from Corollary \ref{cor}. Although, it is not in general true that infinity-harmonic functions satisfy \eqref{grad2hess}, this observation sets an interesting heuristic guide.

\par

\medskip

Notice that Aronsson's example - as many popular examples in the theory of PDEs - is a function of separable variables. In the sequel, as an application of Corollary \ref{cor}, we show that any infinity-harmonic function with separable variables is locally of class $C^{1, \frac{1}{3}}$. 

\begin{proposition}\label{infty-lap01} Let $u \colon B_1 \subset \mathbb{R}^d \to \mathbb{R}$ be infinity harmonic. Assume $u$ is a function of separable variables, i.e.,
$$
	u(X) = \sigma_1(x_1) + \sigma_2(x_2) + \cdots \sigma_d(x_d),
$$
for $\sigma_i \in C^0(B_1)$. Then $u \in C^{1, \frac{1}{3}}(B_{1/2})$. 
\end{proposition}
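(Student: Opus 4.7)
The plan is to decouple the equation $\Delta_\infty u=0$ into a collection of one-dimensional degenerate ODEs for the profiles $\sigma_i$, and then harvest the regularity from Corollary \ref{cor} with $\gamma=2$. Since $u=\sum_i\sigma_i(x_i)$, the mixed second partials of $u$ vanish identically and, formally,
$$
\Delta_\infty u(X)\;=\;\sum_{i=1}^{d}\bigl(\sigma_i'(x_i)\bigr)^{2}\sigma_i''(x_i)\;=\;0.
$$
As the $i$-th summand depends only on $x_i$, this forces each summand to be a constant $c_i$, with $\sum_i c_i=0$, i.e.\
\begin{equation}\label{1D-sigma}
    |\sigma_i'(x_i)|^{2}\,\sigma_i''(x_i)\;=\;c_i\quad\text{on its slice},\qquad i=1,\ldots,d.
\end{equation}

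The core of the proof is to upgrade \eqref{1D-sigma} to a genuine viscosity ODE for each $\sigma_i$. The tool is to test $u$ at a prescribed point $X_0=(x_1^0,\ldots,x_d^0)$ with separable $C^2$ barriers
$$
\varphi(X)\;=\;\psi_1(x_1)+\sum_{i=2}^{d}q_i(x_i),
$$
where $\psi_1$ touches $\sigma_1$ at $x_1^0$ from above and each $q_i$ is a mollified quadratic upper barrier for $\sigma_i$ at $x_i^0$ of large curvature $A$, built from the local Lipschitz bound on $\sigma_i$ supplied by infinity-harmonic theory. Because $D^2\varphi$ is diagonal, the viscosity subsolution inequality $\Delta_\infty\varphi(X_0)\ge 0$ decomposes as
$$
|\psi_1'(x_1^0)|^{2}\psi_1''(x_1^0)+\sum_{i\ge 2}\bigl(q_i'(x_i^0)\bigr)^{2}q_i''(x_i^0)\;\ge\;0.
$$
Passing to the limit $A\to+\infty$ and reversing above/below extracts the two-sided 1D viscosity equation $|\sigma_1'|^{2}\sigma_1''=c_1$, and permuting the coordinates handles every index. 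With \eqref{1D-sigma} secured I apply Corollary \ref{cor} to each $\sigma_i$ in one space dimension: degeneracy exponent $\gamma=2$, source $f\equiv c_i\in L^\infty$, and $F(\xi)=\xi$ on $\mathrm{Sym}(1)=\mathbb R$ (uniformly elliptic and trivially concave). The corollary then yields $\sigma_i\in C^{1,\,1/(1+2)}_{\mathrm{loc}}=C^{1,1/3}_{\mathrm{loc}}$. Since $\nabla u(X)=(\sigma_1'(x_1),\ldots,\sigma_d'(x_d))$ is $(1/3)$-H\"older componentwise, with each component depending only on its own variable, $\nabla u$ is $(1/3)$-H\"older on $B_{1/2}$, giving $u\in C^{1,1/3}(B_{1/2})$.

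The main obstacle is the viscosity decoupling itself. A priori only $\sigma_i\in C^{0,1}$ is available, so one cannot form a smooth $d$-dimensional test function for $u$ simply by adding $\psi_1(x_1)$ to the raw profiles $\sigma_i(x_i)$ for $i\ge 2$; one must approximate the latter by smooth upper (or lower) barriers. The quadratic-punishment trick above solves this, but its careful execution requires (i) a quantitative Lipschitz control on the $\sigma_i$ to build the $q_i$ uniformly in $A$; (ii) monitoring the error in $\Delta_\infty\varphi$ as $A\to\infty$, which is where the diagonal structure of $D^2\varphi$ is essential; and (iii) a compatibility check matching the constants extracted from above-touching with those from below-touching, which crucially uses that $\Delta_\infty u=0$ is an equality rather than a pair of one-sided inequalities.
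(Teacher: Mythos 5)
Your high-level strategy coincides exactly with the paper's: compute formally $\Delta_\infty u=\sum_i(\sigma_i')^2\sigma_i''$, use separation of variables to force each summand to be a constant $\tau_i$ with $\sum_i\tau_i=0$, and then invoke Corollary \ref{cor} with $\gamma=2$, $F(\xi)=\xi$ on $\mathrm{Sym}(1)$, and source $\tau_i\in L^\infty$ to conclude $\sigma_i\in C^{1,1/3}_{\mathrm{loc}}$. This is verbatim the paper's proof, which disposes of the viscosity justification with the sentence ``it is a manner of routine.'' You deserve credit for attempting to actually supply that step.

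However, the specific viscosity device you propose does not deliver what is needed, and the gap is concrete. After writing $\varphi(X)=\psi_1(x_1)+\sum_{i\ge2}q_i(x_i)$ with $q_i$ a quadratic upper barrier of curvature $A$ touching $\sigma_i$ at $x_i^0$, the subsolution inequality gives $|\psi_1'|^2\psi_1''+\sum_{i\ge2}(q_i')^2 q_i''\ge0$ at the touching point, where $q_i''(x_i^0)=A$. To extract $|\psi_1'|^2\psi_1''\ge\tau_1$ you would need $\sum_{i\ge2}(q_i')^2q_i''\to\sum_{i\ge2}\tau_i=-\tau_1$ as $A\to\infty$; but $(q_i')^2q_i''=(q_i'(x_i^0))^2\,A$, which either blows up to $+\infty$ (if $q_i'(x_i^0)\ne0$, making the inequality vacuous) or is identically $0$ (if $q_i'(x_i^0)=0$, which gives only the one-sided bound $|\psi_1'|^2\psi_1''\ge0$ -- and a two-sided argument of this kind would force $\sigma_1$ to be $1$-D infinity-harmonic, i.e.\ affine, which is false for Aronsson's example $x^{4/3}-y^{4/3}$). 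In no regime does the large-$A$ limit produce the sharp constant $\tau_1$. What is actually needed is to select \emph{good} touching abscissas $x_i^0$ for $i\ge2$ at which $\sigma_i$ admits a genuine second-order expansion, and to use barriers built from that expansion (plus a small $\epsilon$-quadratic, $\epsilon\to0$), not a generic large-curvature quadratic; this in turn requires a preliminary argument giving a.e.\ twice-differentiability of the 1-D Lipschitz profiles $\sigma_i$, which you do not provide. Your point (iii) correctly flags that the extracted constants must be matched between the two touch directions, but the machinery you describe cannot reach that stage. Since the paper itself omits these details, your proposal is not worse than the published proof, but your sketched repair of the hand-waved step is not correct as written.
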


\begin{proof} Formal direct computation gives
\begin{equation} \label{eq01-proof-teo-infty-lap01}
	0 = \Delta_\infty u = |\sigma_1(x_1)'|^2 \sigma''(x_1) + |\sigma_1(x_2)'|^2 \sigma''(x_2) + \cdots + |\sigma_d(x_d)'|^2 \sigma''(x_d).
\end{equation}
It is a manner of routine to justify the above computation using the viscosity solution machinery. We notice, however, that the $i$th term in \eqref{eq01-proof-teo-infty-lap01} depends only upon the variable $x_i$. Thus, since they sum up to zero, each of them must be constant, i.e.,
\begin{equation} \label{eq01-proof-teo-infty-lap02}
	 |\sigma_i(x_i)'|^2 \sigma''(x_i) = \tau_i, \quad \sum\limits_{i=1}^d \tau_i = 0.
\end{equation}
$C^{1, \frac{1}{3}}$-regularity of each $\sigma_i$ follows from Corollary \ref{cor} and the proof of Proposition \ref{infty-lap01} is concluded.
\end{proof}

\par

\medskip

In a number of geometrical problems, it is often that solutions behave asymptotically radial near singular points. It is therefore interesting to analyze the regularity theory for solutions that are  smooth up to a possible radial singularity. More precisely, we say a function $u$ is smooth up to a possible radial singularity at a point $X_0$ if we can write, near $X_0$, 
$$
	u(X) = \varphi(X) + \psi(|X - X_0|),
$$
with $\varphi \in C^2$ and $\varphi(X) = \text{O}(|X-X_0|^2)$.

\par

\medskip

In the sequel we shall prove that functions smooth up to a possible radial singularity whose infinity-laplacian is bounded in the viscosity sense is of class $C^{1, \frac{1}{3}}$. This regularity is optimal as 
$$
	\Delta_\infty |X|^{\frac{4}{3}} = \text{cte}.
$$

\begin{theorem}\label{infty-lap02} Let $u \in C^0(B_1)$ satisfy 
$$
	\Delta_\infty u = f(X) \in L^\infty(B_1)
$$
in the viscosity sense. Assume $u$ is smooth up to a possible radial singularity. Then $u \in C^{1, \frac{1}{3}}_\text{loc}(B_1)$. 
\end{theorem}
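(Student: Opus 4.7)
The plan is to reduce the problem to a one-dimensional equation for the radial profile $\psi$, in the spirit of the proof of Proposition \ref{infty-lap01}. Without loss of generality take $X_0 = 0$ and write $u(X) = \varphi(X) + \psi(|X|)$ in some ball $B_\rho$, with $\varphi \in C^2(B_\rho)$, $\varphi(0) = 0$ and $\nabla \varphi(0) = 0$. Since $\varphi$ is of class $C^2 \subset C^{1,1/3}$, it suffices to show that the radial part $v(X) := \psi(|X|)$ is of class $C^{1,1/3}_\text{loc}$, which by radial symmetry reduces to proving $\psi \colon [0, R) \to \mathbb{R}$ is of class $C^{1,1/3}$ for some $R > 0$.

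The key identity, obtained by a direct computation at a point $X = r\hat{x}$ in an orthonormal frame with radial component $\hat{x}$, reads
\[
\Delta_\infty \bigl( \varphi + \phi(|\cdot|) \bigr)(r\hat{x}) \;=\; \bigl( \phi'(r) + a_{\hat{x}}(r\hat{x}) \bigr)^2 \phi''(r) \;+\; \mathcal{R}\bigl(r, \hat{x}, \phi'(r)\bigr),
\]
valid for any smooth radial perturbation $\phi(|X|)$, where $a_{\hat{x}}(X) := \hat{x} \cdot \nabla \varphi(X) = O(|X|)$ and the remainder $\mathcal{R}$ does not involve $\phi''$ and is bounded by $C \bigl( 1 + |\phi'(r)|^2 + r|\phi'(r)| \bigr)$. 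Given any smooth $\phi$ touching $\psi$ from above at $r_0 > 0$, the lifted test function $\Phi(X) := \varphi(X) + \phi(|X|)$ touches $u$ from above at every point of the sphere $\{ |X| = r_0 \}$; the viscosity sub-solution inequality for $u$, together with the analogous super-solution version, then yields a pointwise inequality on $\phi''(r_0)$. Exploiting that $a_{\hat{x}} = O(r)$ is small, I would conclude that $\psi$ is a viscosity solution on $(0, R)$ of a one-dimensional degenerate equation
\[
|\psi'(r)|^2 \, \psi''(r) = g(r), \qquad g \in L^\infty_\text{loc}\bigl((0, R)\bigr).
\]

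Corollary \ref{cor} applied in dimension one, with $\gamma = 2$ and the trivially concave operator $F(M) = M$, then delivers $\psi \in C^{1, 1/3}_\text{loc}((0, R))$. To propagate this estimate up to the origin, I would exploit the scale-invariance of both the equation $|\psi'|^2 \psi'' = g$ and of the $C^{1,1/3}$ seminorm under the rescaling $\psi_\lambda(r) := \lambda^{-4/3} \psi(\lambda r)$: a uniform $C^{1,1/3}$ estimate on the family $\{\psi_\lambda\}_{\lambda > 0}$ on a fixed dyadic interval such as $[1/2, 1]$ translates, by scale-invariance, into the same quantitative estimate on each window $[\lambda/2, \lambda]$, and these pieces glue together to yield $\psi \in C^{1,1/3}([0, R'))$ for some $R' > 0$. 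Combining with $\varphi \in C^2$ produces $u \in C^{1,1/3}_\text{loc}(B_1)$.

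The main obstacle is the rigorous derivation of the 1D viscosity equation for $\psi$: the test-function inequality involves $\bigl( \phi'(r) + a_{\hat{x}} \bigr)^2$ rather than $(\phi'(r))^2$, and the two are only comparable when $|\phi'(r)| \gg |a_{\hat{x}}| = O(r)$. I would resolve this by a dichotomy at each point $r$: in the regime $|\psi'(r)| \lesssim r$, the bound $|\psi'(r)| \le Cr \le Cr^{1/3}$ is immediate and already stronger than needed; in the complementary regime $|\psi'(r)| \gtrsim r$, the correction $a_{\hat{x}}$ is of genuinely lower order, so $\bigl( \phi' + a_{\hat{x}} \bigr)^2 = (\phi')^2 \bigl( 1 + o(1) \bigr)$, and the lower-order pieces collected in $\mathcal{R}$ can be absorbed using a preliminary local $L^\infty$ bound on $\psi'$, which in turn is a consequence of the local Lipschitz regularity of viscosity solutions to $\Delta_\infty u \in L^\infty$.
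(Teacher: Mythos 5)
Your route---decompose $u = \varphi + \psi(|X|)$, compute $\Delta_\infty u$ in a radial frame to obtain an effective one--dimensional degenerate equation for $\psi$ with an $O(r)$--perturbed weight, and invoke Corollary \ref{cor} with $\gamma = 2$---is precisely the one taken in the paper, which records the inequality $\bigl(O(r) + |\psi'|^2\bigr)|\psi''| \le C_4$ and declares the viscosity verification ``standard.'' Your attempt to flesh out the viscosity lifting of 1D test functions to radial tests, and the dichotomy on whether $|\psi'|$ dominates $a_{\hat{x}} = O(r)$, is consistent with what the paper leaves implicit.

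There is, however, a genuine gap in your final step. The scaling argument you propose to push the interior $C^{1,1/3}$ estimate on $(0,R)$ up to $r = 0$ is circular: for the rescalings $\psi_\lambda(r) = \lambda^{-4/3}\psi(\lambda r)$ to be uniformly bounded on $[1/2,1]$ (so that the interior estimate can even be applied to them with a uniform constant), one already needs $|\psi(r) - \psi(0) - \psi'(0^+)\,r| \lesssim r^{4/3}$, which is exactly the pointwise $C^{1,1/3}$ control at the origin you are trying to establish. Subtracting the initial affine approximation does not repair this, because the degeneracy weight $|\psi'|^2$ is not invariant under $\psi \mapsto \psi - \psi'(0^+)r$: after the shift the profile satisfies an equation of the form $|\q + \tilde\psi'|^\gamma\,\tilde\psi'' = g$ with the frozen vector $\q = \psi'(0^+)$, which is precisely why Lemma \ref{key 1} and Lemma \ref{iterative} are formulated for the shifted family $(E_{\q})$ and why the affine approximation must be re-chosen at every dyadic scale. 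The clean fix is therefore to cite Corollary \ref{cor} directly (with $d=1$, $\gamma = 2$, $F(M) = M$ linear hence concave) rather than rederive it by a single self-similar rescaling. You should then add one remark that the extra $O(r)$ in $\mathcal{H}(r,p) = O(r) + |p|^2$ breaks the upper bound in \eqref{(2)} but only makes the equation less degenerate, so the lower bound $\mathcal{H} \ge |p|^2$---which is what the compactness step of Lemma \ref{univ-comp} actually uses---still holds and the conclusion of Corollary \ref{cor} persists.
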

\begin{proof} With no loss of generality, we can assume $X_0 = 0$. If $u = \varphi(X)+ \psi(|X|)$ is smooth up to a radial singularity near the origin, then formally a direct computation yields
\begin{eqnarray}
	\nabla  u (X) &=&  \nabla \varphi +  \psi' \frac{X}{|X|}, \nonumber \\
	D^2 u(X) &=& D^2\varphi + \dfrac{1}{|X|^2}\psi'' X \otimes X + \psi' \left [ \frac{1}{|X|} \text{Id} - \frac{1}{|X|^3} X \otimes X \right ]. \nonumber
\end{eqnarray}
Owing to the estimates
\begin{eqnarray}
	|X|^{-2} |\varphi| + |X|^{-1} |\nabla \varphi| + |D^2 \varphi| & \le& C_1, \nonumber \\
	|\psi| + |\nabla \psi| & \le&  C_2, \nonumber \\
	|\Delta_\infty u| & \le & C_3 \nonumber,
\end{eqnarray}
we end up with 
$$
	\left ( \text{O}(r) + | \psi'|^2 \right )\cdot |\psi''| \le 	C_4,
$$
which ultimately gives the desired regularity for $\psi$. Again is it standard to verify the above computation using the language of viscosity solutions. 
\end{proof}

For functions with bounded infinity-laplacian, E. Lindgren, following ideas from \cite{ES}, has recently established Lipschitz estimate and everywhere differentiability. 

\subsection{Further degenerate elliptic equations}

Another interesting example to visit is the $p$-laplacian operator, $p\ge 2$: 
\begin{equation}\label{p-laplace01}
	\Delta_p u := \text{div} \left ( |\nabla u|^{p-2} \nabla u \right ).
\end{equation}
It appears for instance as the Euler-Lagrangian equation associated to the $p$-energy integral
$$
	\int (Du)^p dX \to \text{min}.
$$
Equations involving the the $p$-laplacian operator has received a great deal of attention for the past fifty years or so. 
In particular, the regularity theory for $p$-harmonic functions has been an intense subject of investigation, since the late 60's, when Uraltseva in \cite{U} proved that weak solutions to the homogeneous $p$-laplacian equation
\begin{equation}\label{p-laplace02}
	\Delta_p h = 0,
\end{equation}
is locally of class $C^{1,\alpha(d,p)}$, for some $\alpha(d,p)>0$. The sharp regularity for $p$-harmonic functions in the plane was obtained by Iwanec and Manfredi, \cite{IM}. The precise optimal H\"older continuity exponent of the gradient of $p$-harmonic functions in higher dimensions, $d\ge 3$, has been a major open problem since then. 

\par
\medskip

The $p$-laplacian operator can be written in non-divergence form, simply by passing formally the derivatives through: 
\begin{equation}\label{p-laplace03}
	\Delta_p u =  |\nabla|^{p-2} \Delta u + (p-2)|\nabla u|^{p-4} \Delta_\infty u.
\end{equation}
The notions of weak solutions, using its divergence structure in \eqref{p-laplace01}, and the non-divergence form in \eqref{p-laplace03} are equivalent, \cite{JLM}.

\par
\medskip

Within the context of functions with bounded $p$-laplacian, the conjecture is that the optimal regularity should be $C^{p'}$, where
$$
	\frac{1}{p} + \frac{1}{p'} = 1.
$$
Our next result gives a partial answer to this conjecture.

\begin{theorem}\label{thm-p-lap01} Let $p \ge 2$ and  $u$ satisfy 
$$
	|\Delta_p u | \le C,
$$
Assume $u$ is smooth up to a possible radial singularity. Then $u \in C^{p'}_\text{loc}(B_1)$. 
\end{theorem}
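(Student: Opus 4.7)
The plan is to mirror the approach of Theorem \ref{infty-lap02}. Assume without loss of generality that $X_0 = 0$ and write $u(X) = \varphi(X) + \psi(|X|)$ with $\varphi \in C^2$ and $\varphi(X) = O(|X|^2)$. Since $p \ge 2$ gives $p' \le 2$, the smooth part $\varphi \in C^2 \subset C^{p'}$ is automatically at the target regularity, so the task reduces to showing $\psi \in C^{p'}([0, 1/2))$. The target exponent $p' = 1 + \tfrac{1}{p-1}$ is precisely what Corollary \ref{cor} delivers for the one-dimensional model $|\sigma'|^{p-2}\sigma'' \approx 1$, i.e., for $\gamma = p-2$, so the strategy is to reduce the bound $|\Delta_p u| \le C$ to such a 1D degenerate elliptic relation for $\psi$.

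Following the computation of Theorem \ref{infty-lap02}, with $r = |X|$ and $e = X/r$,
$$\nabla u = \nabla\varphi + \psi'(r)\,e, \qquad D^2 u = D^2\varphi + \psi''(r)\,e \otimes e + \frac{\psi'(r)}{r}\bigl(\mathrm{Id} - e \otimes e\bigr),$$
which give $\Delta u = \Delta\varphi + \psi'' + (d-1)\psi'/r$ and $\Delta_\infty u = (\psi')^2\psi'' + \text{lower order}$. Plugging into the non-divergence form \eqref{p-laplace03} and collecting yields
$$\Delta_p u \;=\; (p-1)|\psi'|^{p-2}\psi''(r) + \frac{d-1}{r}|\psi'|^{p-2}\psi'(r) + \mathcal{R},$$
where the remainder $\mathcal{R}$ is built from cross-terms with $\varphi$ and is controlled by $\|\varphi\|_{C^2}$; crucially, the hypothesis $\varphi = O(r^2)$ forces every such cross-term to carry an explicit extra power of $r$ or $\psi'$ relative to the leading radial terms. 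I would then recognize that the leading part is the radial $p$-Laplacian, satisfying the divergence identity
$$(p-1)|\psi'|^{p-2}\psi'' + \frac{d-1}{r}|\psi'|^{p-2}\psi' \;=\; \frac{1}{r^{d-1}}\bigl(r^{d-1}|\psi'|^{p-2}\psi'\bigr)'.$$
Multiplying $|\Delta_p u| \le C$ by $r^{d-1}$ and integrating from $0$ to $r$ (with $\psi'(0)=0$ from the pointwise differentiability of $u$ at the origin) should produce $r^{d-1}|\psi'(r)|^{p-1} \lesssim r^d$, i.e., $|\psi'(r)| \le C' r^{1/(p-1)}$, which is exactly the $C^{1,1/(p-1)} = C^{p'}$ growth; a parallel two-radius estimate yields the matching H\"older modulus for $\psi'$. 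Alternatively, once the first-order radial drift has been absorbed into a bounded right-hand side, one may invoke Corollary \ref{cor} with $\gamma = p-2$ directly on the 1D function $\psi$ to obtain the same conclusion.

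The main obstacle I foresee is the rigorous transfer of this formal ODE computation to the viscosity setting at the potentially degenerate point $r = 0$ — the step the authors dispatch tersely at the end of Theorem \ref{infty-lap02}. The decisive structural feature preventing circularity in the argument is the vanishing $\varphi(X) = O(|X|^2)$, which guarantees that every term in $\mathcal{R}$ is integrable against $r^{d-1}\,dr$ and cannot generate a spurious $r^{-1}$ singularity. One should also verify that, when $p > 4$, the factor $|\nabla u|^{p-4}$ appearing in \eqref{p-laplace03} does not produce an artificial blow-up: this is controlled by the matching $|\psi'|^2$ at leading order in $\Delta_\infty u$, so that the product $|\nabla u|^{p-4}\Delta_\infty u$ scales as $|\psi'|^{p-2}\psi''$ uniformly in $r$.
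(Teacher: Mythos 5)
Your main argument is correct and takes a genuinely different route from the paper. After the common step of plugging the radial decomposition $u = \varphi + \psi(|X|)$ into the non-divergence form \eqref{p-laplace03}, the paper reduces matters to a one-dimensional degenerate inequality of the form
$\bigl(O(r) + |\psi'|\bigr)^{p-2}|\psi''| + \bigl(O(r) + |\psi'|\bigr)^{p-4}|\psi'|^2|\psi''| \lesssim 1$
and then cites Corollary~\ref{cor} with $\gamma = p-2$; you instead exploit the divergence identity
$(p-1)|\psi'|^{p-2}\psi'' + \tfrac{d-1}{r}|\psi'|^{p-2}\psi' = r^{1-d}\bigl(r^{d-1}|\psi'|^{p-2}\psi'\bigr)'$
and integrate $r^{d-1}|\Delta_p u| \le Cr^{d-1}$ from $0$ to $r$ to obtain $|\psi'(r)| \lesssim r^{1/(p-1)}$ directly, bootstrapping to the H\"older modulus afterwards. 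Your route has a concrete advantage: the first-order radial drift $\tfrac{d-1}{r}|\psi'|^{p-2}\psi'$ is absorbed cleanly into the perfect derivative and never needs to be declared ``lower order'' or bounded \emph{a priori}. The paper's displayed inequality omits this drift term entirely and does not explain how it is absorbed into $\tilde C$; moreover, the appearance of the $O(r)$ perturbation in the degeneracy factor means the hypothesis \eqref{(2)} of Corollary~\ref{cor} is not literally met, so invoking that corollary is somewhat informal. You are also right that the ``alternative'' use of Corollary~\ref{cor} you float at the end only becomes non-circular once the integration step has already delivered $|\psi'| \lesssim r^{1/(p-1)}$, since that is what makes the drift bounded. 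The residual technical concerns you flag --- rigorous viscosity justification of the formal ODE reduction at $r=0$, and control of the cross-terms in $\mathcal{R}$ using $\varphi = O(r^2)$ --- are the same ones the paper dispatches in a single sentence, so you are at parity there.
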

\begin{proof} Assume $u = \varphi(X) + \psi(r)$, with $\varphi = \text{O}(r^2)$, $r = |X|$, has bounded $p$-laplacian in the viscosity sense. Direct computation implies
$$
	\left ( \text{O}(r) + |\psi'| \right )^{p-2} |\psi''| + \left ( \text{O}(r) + |\psi'| \right )^{p-4}|\psi'|^2 |\psi''| \le C + \tilde{C},
$$
holds in the viscosity sense, where $\tilde{C}$ depends only on $\varphi$, the non-singular part of $u$. Thus,  Corollary \ref{cor} gives 
$$
	u \in C_{\text{loc}}^{1, \frac{1}{1+(p-2)}} \cong C_{\text{loc}}^{p'},
$$
and the Theorem is proven.  
\end{proof}

Estimates of the form $|\nabla u|^{p-2} |D^2 u| < C$ are not rare in a number of geometric problems involving the $p$-laplacian operator, see for instance \cite{LS}. Let us also mention that estimates of the form $(\epsilon+ |\nabla v|^2)^{\frac{p-1}{2}}|D^2v| < C$ are usually obtained for bounded weak solutions to divergent form equations, $D_i\left (A^i(Dv) \right ) = 0$, see for instance \cite{G}, Chapter 8.

\par
\medskip

Let us finish this Section, by revisiting the derivation of the infinity-laplacian operator as the limit of $p$-laplace, as $p \to \infty$. Let $h \in C^0(B_1)$ be an infinity-harmonic function. For each $p \gg 1$, let $h_p$ be the solution to the boundary value problem
$$
	\left \{
		\begin{array}{rll}
			\Delta_p h_p &=& 0, \text{ in } B_{3/4} \\
			h_p & = & h, \text{ on } B_{3/4}.
		\end{array}
	\right.
$$ 
It is known that $h_p$ form a sequence of equicontinuous functions and $h_p \to h$ locally uniformly to $h$. In particular
$$
	\Delta_\infty h_p = \text{o}(1), \quad \text{as } p \to \infty.
$$ 
Hereafter, let us call $h_p$ the $p$-harmonic approximation of the infinity-harmonic function $h$ in $B_{3/4}$.

\begin{proposition} \label{prop1} Let $h \in C^0(B_1)$ be an infinity-harmonic function and $h_p$ its $p$-harmonic approximation. Assume $|\Delta_\infty h_p| = \text{O}(p^{-1})$ as $p \to \infty$. Then $h \in C^{1, \frac{1}{3}}(B_{1/2})$. 
\end{proposition}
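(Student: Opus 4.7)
The plan is to recognize the statement as a direct application of Corollary \ref{cor} with $\gamma = 2$, applied uniformly along the approximating sequence $\{h_p\}$, followed by passage to the limit. The essential observation is that the $p$-harmonic equation, written in its non-divergence form \eqref{p-laplace03}, can be recast under the hypothesis $|\Delta_\infty h_p| = \text{O}(p^{-1})$ as a degenerate elliptic equation with bounded right-hand side, uniformly in $p$.

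First I would use that $h_p$ solves $\Delta_p h_p = 0$ in $B_{3/4}$ in the viscosity sense (by the equivalence in \cite{JLM}) and rewrite it via \eqref{p-laplace03} as
$$
|\nabla h_p|^{p-2}\Delta h_p + (p-2)|\nabla h_p|^{p-4}\Delta_\infty h_p = 0.
$$
Factoring out $|\nabla h_p|^{p-4}$ leads, away from critical points and in the appropriate viscosity sense, to
$$
|\nabla h_p|^{2}\,\Delta h_p \; = \; -(p-2)\,\Delta_\infty h_p \; =: \; g_p(X).
$$
The hypothesis $|\Delta_\infty h_p| = \text{O}(p^{-1})$ gives $\|g_p\|_{L^\infty(B_{3/4})} \leq C$ with $C$ independent of $p$. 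Since $\mathcal{H}(\p) = |\p|^2$ satisfies \eqref{(2)} with $\gamma = 2$ and $F(D^2 u) = \Delta u$ is linear (hence both concave and uniformly elliptic with trivial continuous coefficients), Corollary \ref{cor} applies and delivers the uniform estimate
$$
\|h_p\|_{C^{1,\,1/3}(B_{1/2})} \leq C^\star,
$$
where $1/(1+\gamma) = 1/3$ and $C^\star$ depends only on $d$, on the ellipticity constants and on $C$, but not on $p$.

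Finally, since $h_p \to h$ locally uniformly in $B_{3/4}$, the uniform $C^{1,1/3}$ estimate together with Arzel\`a-Ascoli yields local uniform convergence of $\nabla h_p$ along a subsequence and allows the $C^{1,1/3}(B_{1/2})$ estimate to be transferred to the limit $h$. The main obstacle is the rigorous viscosity-sense interpretation of the division by $|\nabla h_p|^{p-4}$ at critical points; this is precisely the delicate point circumvented by the degenerate framework of \eqref{Eq-Intro}, where the equation is treated in the form $\mathcal{H}(\nabla u)F(D^2 u) = f$ without ever needing to invert the degeneracy factor. Once this is checked (using test functions $\varphi$ with $\nabla\varphi(X_0) \neq 0$ where the argument is straightforward, and standard arguments for $\nabla\varphi(X_0) = 0$), the proof is complete.
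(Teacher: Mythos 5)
Your proposal follows essentially the same route as the paper's proof: rewrite the $p$-harmonic equation in the non-divergence form $|\nabla h_p|^{2}\,\Delta h_p = (2-p)\,\Delta_\infty h_p$, use the hypothesis $|\Delta_\infty h_p| = \mathrm{O}(p^{-1})$ to bound the right-hand side uniformly in $p$, apply Corollary~\ref{cor} with $\gamma = 2$ (giving exponent $\tfrac{1}{1+\gamma}=\tfrac13$), and pass the uniform $C^{1,1/3}$ estimate to the limit $h_p\to h$. The paper's version is terser (invoking the maximum principle for the normalization and writing ``standard reasoning'' for the limiting step), but the key identities, the choice of $\gamma$, and the appeal to Corollary~\ref{cor} are identical; you have also helpfully flagged the viscosity-sense justification of the cancellation of $|\nabla h_p|^{p-4}$, which the paper leaves implicit.
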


\begin{proof} Since $h_p$ is $p$-harmonic, it satisfies 
$$
	|\nabla h_p|^{2} \Delta h_p = (2-p) \Delta_\infty h_p.
$$	
By the maximum principle,
$$
	\|h_p\|_{L^\infty_{(B_{4/5})}} \le \|h\|_{L^\infty_{(B_{1})}}.
$$
From the approximation hypothesis and Corollary \ref{cor}, we deduce
$$
	\|h_p\|_{C^{1,\frac{1}{3}}(B_{1/2})} \le C,
$$
for a constant $C$ that is independent of $p$.  The proof of Proposition follows by standard reasoning. 
\end{proof}

Another interesting Proposition regards $p$-harmonic functions with bounded infinity-laplacian.

\begin{proposition} \label{prop2} Let $u$ be a $p$-harmonic function in $B_1 \subset \mathbb{R}^d$. Assume $\Delta_\infty u \in L^\infty(B_1)$. Then $u \in C^{1, \frac{1}{3}}(B_{1/2})$. 
\end{proposition}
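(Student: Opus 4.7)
The plan is to reduce the $p$-Laplacian equation, written in the non-divergence form \eqref{p-laplace03}, to the degenerate model $|\nabla u|^{2}\Delta u = g \in L^\infty$ and then invoke Corollary \ref{cor} with $\gamma = 2$ and linear diffusion $F = \Delta$. The argument is essentially the same as the proof of Proposition \ref{prop1}, with the difference that no limiting procedure in $p$ is required: here $p\geq 2$ is fixed.

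First I would recall (see \cite{JLM}) that a $p$-harmonic function $u$ is equivalently a viscosity solution of $\Delta_p u = 0$. Reading \eqref{p-laplace03} on the non-critical set $\{|\nabla u|>0\}$ and dividing by $|\nabla u|^{p-4}$, one obtains
$$
|\nabla u|^{2}\Delta u \;=\; -(p-2)\Delta_\infty u.
$$
By hypothesis $\Delta_\infty u \in L^\infty(B_1)$, so the right-hand side is an $L^\infty$ datum bounded by $C_0 := (p-2)\|\Delta_\infty u\|_{L^\infty(B_1)}$. The resulting equation fits into the framework of Corollary \ref{cor}: $\mathcal{H}(X,\p)=|\p|^{2}$ satisfies \eqref{(2)} with $\gamma=2$, and $F(M)=\mathrm{tr}(M)$ is linear, hence trivially uniformly elliptic and concave. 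Corollary \ref{cor} then delivers the optimal estimate
$$
u \;\in\; C^{1,\frac{1}{1+2}}_{\mathrm{loc}}(B_1) \;=\; C^{1,\frac{1}{3}}_{\mathrm{loc}}(B_1),
$$
completing the argument.

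The main obstacle, albeit a mild one, is the rigorous viscosity justification of the reduction at critical points of $u$. I would argue at the level of test functions: when a smooth $\varphi$ touches $u$ from one side at $X_0$ with $\nabla\varphi(X_0)\neq 0$, the algebraic manipulation above is legitimate and the viscosity inequality for $\Delta_p u = 0$ translates into the corresponding inequality for $|\nabla u|^{2}\Delta u = g$ with $\|g\|_\infty \le C_0$, exactly as the hypothesis $\Delta_\infty u \in L^\infty(B_1)$ is interpreted in the proof of Proposition \ref{infty-lap02}. Test functions whose gradient vanishes at the contact point are handled by the standard convention for degenerate fully nonlinear equations adopted in \cite{IS}, and the resulting test of $u$ as a viscosity solution of $|\nabla u|^{2}\Delta u = g$ is consistent at those degenerate points as well. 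With this bookkeeping done, Corollary \ref{cor} closes the proof.
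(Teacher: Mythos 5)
Your proposal is correct and takes essentially the same route as the paper: rewrite $\Delta_p u = 0$ in the non-divergence form \eqref{p-laplace03}, factor out $|\nabla u|^{p-4}$ to obtain $|\nabla u|^{2}\Delta u = (2-p)\Delta_\infty u$ with a bounded right-hand side, and invoke Corollary \ref{cor} with $\gamma = 2$ and $F = \Delta$. This is precisely the reasoning the paper borrows from Proposition \ref{prop1}, and your remark about the viscosity bookkeeping at critical contact points is the same detail the paper itself elects to omit.
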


\begin{proof} The proof follows by similar reasoning as in the proof of Proposition \ref{prop1}. We omit the details.
\end{proof}

We leave as an open problem whether Proposition \ref{prop2} holds true without the extra assumption on the boundedness of the infinity-laplacian. We further conjecture that if $\alpha(d,p)$ is the optimal (universal) H\"older continuity exponent for $p$-harmonic functions, then
$$
	\alpha(d, p) >  \frac{1}{3} + \text{o}(1), \quad \text{ as p } \to \infty.
$$

\section{Universal compactness} \label{sec comp}

From this Section on we start delivering the proof for the main sharp regularity estimate announced in Section \ref{sec statement}, namely, Theorem \ref{Teo-Princ}. In this first step, we obtain a universal compactness device to access the optimal regularity theory for solutions to Equation \eqref{Eq-Intro}. The proof we shall present here uses the main technical tool obtained in the recent work of Imbert and Silvestre, \cite{IS}.

\begin{lemma} \label{univ-comp} Let $\q \in \mathbb{R}^d$ be an arbitrary vector and $u \in C(B_1)$, a viscosity solution to
\begin{equation} \tag{$E_{\q}$}
	|\q + \nabla u|^\gamma F(X, D^2u) = f(X),
\end{equation}
satisfying  $\|u\|_{L^\infty(B_1)} \le 1$. Given $\delta >0$, there exists $\varepsilon>0$, that depends only upon $d, \lambda, \Lambda$, and $\gamma$, such that if
\begin{equation}\label{uc-cond}
	\|M\|^{-1} \cdot \|F(X, M) - F(0,M)\|_{L^\infty(B_1)} +  \|f\|_{L^\infty(B_1)} < \varepsilon,
\end{equation}
then we can find a function $h$, solution to a constant coefficient, homogeneous, $(\lambda, \Lambda)$-uniform elliptic equation
\begin{equation}\label{uc-eq}
	\mathfrak{F}(D^2h) = 0, \quad B_{1/2} 
\end{equation}
such that
\begin{equation}\label{uc-conclusion}
	\|u - h\|_{L^\infty(B_{1/2})} \le \delta.
\end{equation}

\end{lemma}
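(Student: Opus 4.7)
\medskip

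The natural approach is a contradiction/compactness argument driven by the \emph{a priori} $C^{1,\alpha}$ estimate from Imbert-Silvestre. Assume the conclusion fails. Then there exist $\delta_0>0$ and sequences $\varepsilon_k\searrow 0$, operators $F_k$ satisfying \eqref{unif ellip} and \eqref{uc-cond} with $\varepsilon=\varepsilon_k$, right-hand sides $f_k$ with $\|f_k\|_\infty<\varepsilon_k$, vectors $\q_k\in\mathbb{R}^d$, and viscosity solutions $u_k\in C(B_1)$ with $\|u_k\|_\infty\le 1$ to $(E_{\q_k})$, such that
\[
    \inf\bigl\{\,\|u_k-h\|_{L^\infty(B_{1/2})} \ :\ \mathfrak{F}\text{ constant-coefficient }(\lambda,\Lambda)\text{-elliptic},\ \mathfrak{F}(D^2 h)=0\text{ in }B_{1/2}\,\bigr\} > \delta_0.
\]
I would then invoke the main technical estimate of Imbert-Silvestre \cite{IS}, which is precisely tailored to equations of the form $(E_{\q_k})$ and delivers a uniform $C^{1,\beta}(B_{3/4})$ bound on $u_k$, independent of $k$ and of $\q_k$. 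Arzelà-Ascoli then yields, along a subsequence, $u_k\to u_\infty$ in $C^1_\mathrm{loc}(B_{3/4})$.

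\medskip

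Next, I would pass to limits in the coefficients and the data. The smallness \eqref{uc-cond} together with the uniform ellipticity of the $F_k$ gives, via the standard compactness of the class of normalized $(\lambda,\Lambda)$-elliptic operators on bounded subsets of $\text{Sym}(d)$, a constant-coefficient $(\lambda,\Lambda)$-elliptic operator $\mathfrak{F}$ with $\mathfrak{F}(0)=0$ such that $F_k(X,M)\to\mathfrak{F}(M)$ locally uniformly in $(X,M)$. Also $f_k\to 0$ uniformly. For $\q_k$, extract a further subsequence so that either $\q_k\to\q_\infty\in\mathbb{R}^d$, or $|\q_k|\to\infty$ with $\q_k/|\q_k|\to e$ for some unit vector $e$.

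\medskip

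The heart of the argument — and the main obstacle — is now to identify the PDE solved by $u_\infty$ and to show it coincides with (or is close to) a constant-coefficient uniformly elliptic \emph{homogeneous} equation. In the ``large gradient'' case $|\q_k|\to\infty$, I would divide $(E_{\q_k})$ by $|\q_k|^\gamma$, writing
\[
    \Bigl|\tfrac{\q_k}{|\q_k|}+\tfrac{\nabla u_k}{|\q_k|}\Bigr|^\gamma F_k(X,D^2u_k)=\tfrac{f_k(X)}{|\q_k|^\gamma};
\]
the prefactor tends to $|e|^\gamma=1$ uniformly in $B_{3/4}$ thanks to the uniform $C^{1,\beta}$ bound, and the right-hand side tends to $0$. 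Stability of viscosity solutions then gives $\mathfrak{F}(D^2 u_\infty)=0$ in $B_{3/4}$, so $h:=u_\infty$ is admissible. In the ``bounded gradient'' case $\q_k\to\q_\infty$, set $v:=u_\infty+\q_\infty\cdot X$; by stability, $v$ solves $|\nabla v|^\gamma \mathfrak{F}(D^2 v)=0$, and one checks directly from the viscosity definition (testing with polynomials of non-vanishing gradient at the touching point as in \cite{IS}) that this is equivalent to $\mathfrak{F}(D^2 v)=0$. Since $v$ and $u_\infty$ differ by a linear function, $u_\infty$ itself solves the constant-coefficient, homogeneous, $(\lambda,\Lambda)$-elliptic equation $\mathfrak{F}(D^2 u_\infty)=0$, providing the admissible $h:=u_\infty$.

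\medskip

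In either case, $\|u_k-u_\infty\|_{L^\infty(B_{1/2})}\to 0$, contradicting the choice of $\delta_0$. The delicate step is the equivalence in the ``bounded gradient'' case between the degenerate and the uniformly elliptic limit equations — for this I would lean on the precise test-function characterization from \cite{IS}, which is the one new technical ingredient needed beyond the classical stability/compactness machinery for viscosity solutions.
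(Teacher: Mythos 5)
Your proposal follows essentially the same route as the paper: a contradiction/compactness argument, a dichotomy between bounded and unbounded $|\q_k|$, division by $|\q_k|^\gamma$ in the large-gradient case, and the Imbert--Silvestre Section 6 ``cancellation'' argument to pass from $|\nabla v|^\gamma \mathfrak{F}(D^2 v)=0$ to $\mathfrak{F}(D^2 v)=0$ in the bounded-gradient case. So the overall architecture matches.

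One caveat worth flagging: you invoke a uniform $C^{1,\beta}(B_{3/4})$ estimate for solutions of $(E_{\q_k})$ that is independent of $\q_k$. That is stronger than what is directly available (and than what is needed). The Imbert--Silvestre paper only provides, for the $\q$-perturbed equation, a uniform \emph{Lipschitz} estimate when $|\q|$ is above a universal threshold $A_0$ (their Lemma 4), and for $|\q|\le A_0$ one falls back on Imbert's Harnack inequality, which yields only a uniform $C^{0,\beta}$ bound. This is exactly the dichotomy the authors use. Your $C^1$-convergence is not needed anywhere: in the large-gradient case, the Lipschitz bound alone already forces $\nabla u_k/|\q_k|\to 0$ uniformly, so the prefactor goes to $1$; and the stability of viscosity solutions under locally uniform convergence (no gradient convergence required) handles both cases. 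If you downgrade your claimed compactness to $C^{0,\beta}$/Lipschitz, which is what the cited sources actually deliver, the argument is correct and coincides with the paper's.
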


\begin{proof}
Let us suppose, for the sake of contradiction, that the thesis of the Lemma fails. That means that we could find a number $\delta_0>0$ and sequences,  $F_j(X,M)$, $f_j$, $\q_j$ and $u_j$, satisfying
\begin{eqnarray}	
	  F_j(X,M) \text{ is } (\lambda, \Lambda)\text{-elliptic},  \label{proof-uc-eq01} \\
	\|M\|^{-1} \cdot \|F(X, M) - F(0,M)\|_{L^\infty(B_1)}  = \text{o}(1), \label{proof-uc-eq02} \\
	\|f_j\|_{L^\infty(B_1)} = \text{o}(1), \label{proof-uc-eq03} \\
	\|u_j\|_{L^\infty(B_1)} \le 1 \text{ and }|\q_j + \nabla u_j|^\gamma F_j(X, D^2u_j) = f_j,  \label{proof-uc-eq04}
\end{eqnarray}	
however,
\begin{equation}  \label{proof-uc-eq05}
	\sup\limits_{B_{1/2}} |u_j - h| \ge \delta_0,
\end{equation}
for any $h$ satisfying a constant coefficient, homogeneous, $(\lambda, \Lambda)$-uniform elliptic equation \eqref{uc-eq}.
\par
Initially, arguing as in \cite{IS},  the sequence $u_j$ is pre-compact in $C^0(B_{1/2})$-topology. In fact, as in \cite{IS},  Lemma 4, there is a universally large constant $A_0>0$, such that,  if for a subsequence $\{\q_{j_k}\}_{k\in \mathbb{N}}$, there holds,
$$
	|\q_{j_k}| \ge A_0, \quad \forall k \in \mathbb{N},
$$
then, the corresponding sequence of solutions, $\{u_{j_k}\}_{j\in \mathbb{N}}$, is bounded in $C^{0,1}(B_{2/3})$. If  
$$
	|\q_{j}| < A_0, \quad \forall j \ge j_0,
$$
then, by  Harnack inequality, see \cite{Imbert1},  $\{u_{j}\}_{j\ge j_0}$ is bounded in $C^{0,\beta}(B_{2/3})$ for some universal $0< \beta < 1$.   

From the compactness above mentioned,  up to a subsequence, $u_j \to u_\infty$ locally uniformly in $B_{2/3}$. Our ultimate goal is to prove that the limiting function $u_\infty$ is a solution to  a constant coefficient, homogeneous, $(\lambda, \Lambda)$-uniform elliptic equation \eqref{uc-eq}. For that we also divide our analysis in two cases.

If $|\q_j|$ bounded, we can extract a subsequence of $\{\q_j\}$, that converges to some $\q_\infty\in \mathbb{R}^d$. Also, by uniform ellipticity and \eqref{proof-uc-eq02}, up to a subsequence $F_j(X,\cdot)\to \mathfrak{F} (\cdot)$, and 
$$
	|\q_\infty + \nabla u_\infty|^\gamma  \mathfrak{F}(D^2u_\infty) = 0,
$$
Arguing as in \cite{IS}, Section $6$, we conclude that $u_\infty$ is a solution to a constant coefficient, homogeneous elliptic equation, which contradicts  \eqref{proof-uc-eq05}.

If $|\q_j|$ is unbounded, then taking a subsequence, if necessary,  $|\q_j| \to \infty$. In this case, define $\vec{e}_j = \q_j/|\q_j|$ and then $u_j$ satisfies
$$
	\left |  \vec{e}_{j}+\frac{\nabla u_{j}}{|\q_{j}|}\right |^\gamma F_{j}(X,D^2u_{j})=\frac{f_{j}(X)}{|\q_{j}|^\gamma}.
$$
Letting $j \to \infty$ and taking another subsequence, if necessary, we also end up with a limiting function $u_\infty$, satisfying 
$\mathfrak{F}_{\infty}(D^2 u_{\infty})=0$ for some $(\lambda, \Lambda)$-uniform elliptic operator, $\mathfrak{F}_{\infty}$. As before, this gives a contradiction to \eqref{proof-uc-eq05}. The Lemma is proven.
\end{proof}

\section{Universal flatness improvement} \label{sec flat}

In this Section, we deliver the core sharp oscillation decay that will ultimately imply the optimal $C^{1,\alpha}$ regularity estimate for solutions to Equation \eqref{Eq-Intro}. The first task is a step-one discrete version of the aimed optimal regularity estimate. This is the contents of next Lemma.

\begin{lemma} \label{key 1} Let $\q \in \mathbb{R}^d$ be an arbitrary vector and $u \in C(B_1)$  a normalized, i.e., $|u|\le 1$, viscosity solution to
\begin{equation} \tag{$E_{\q}$} 
	|\q + \nabla u|^\gamma F(X, D^2u) = f(X).
\end{equation}
Given $\alpha \in (0, \alpha_0) \cap (0, \frac{1}{\gamma + 1}]$, there exist constants $0< \rho_0 < 1/2$ and $\epsilon_0>0$, depending only upon $d, \lambda, \Lambda, \gamma$ and $\alpha$, such that if 
\begin{equation}\label{small-condition-lemma}
	\|M\|^{-1} \cdot \|F(X, M) - F(0,M)\|_{L^\infty(B_1)}  + \|f\|_{L^\infty(B_1)} \le \epsilon_0,
\end{equation}
then there exists an affine function $\ell(X) = a + \vec{b} \cdot X$, such that
$$
	\sup\limits_{B_{\rho_0}} |u(X) - \ell(X)| \le \rho_0^{1+\alpha}.
$$
Furthermore,
$$
	| a | + |\vec{b}| \le C(d, \lambda, \Lambda),
$$
for a universal constant $C(d, \lambda, \Lambda)$ that depends only upon dimension and ellipticity constants.
\end{lemma}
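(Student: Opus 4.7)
\medskip
\noindent\textbf{Proof plan for Lemma \ref{key 1}.}  The plan is to reduce the statement to the universal $C^{1,\alpha_0}$ regularity of solutions to the limiting homogeneous equation via the approximation device provided by Lemma \ref{univ-comp}. First, given $\alpha \in (0,\alpha_0)\cap(0,(1+\gamma)^{-1}]$, I would select $\rho_0 \in (0,1/2)$ small enough so that
\[
    C^\star \,\rho_0^{1+\alpha_0} \;\le\; \tfrac{1}{2}\,\rho_0^{1+\alpha},
\]
where $C^\star = C^\star(d,\lambda,\Lambda)$ denotes the universal constant governing the $C^{1,\alpha_0}$ estimate (and the Taylor remainder at the origin) for bounded viscosity solutions of any constant coefficient, $(\lambda,\Lambda)$-homogeneous equation $\mathfrak{F}(D^2 h)=0$ in $B_{1/2}$. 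Such a choice is legitimate because $\alpha < \alpha_0$, so $\rho_0^{1+\alpha_0 - (1+\alpha)} = \rho_0^{\alpha_0 - \alpha} \to 0$ as $\rho_0 \to 0$.

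Next, I would feed $\delta := \tfrac{1}{2}\rho_0^{1+\alpha}$ into Lemma \ref{univ-comp}, obtaining a corresponding threshold $\varepsilon = \varepsilon(d,\lambda,\Lambda,\gamma,\delta)$, which then determines the desired $\epsilon_0$ in \eqref{small-condition-lemma}. Under the smallness hypothesis, Lemma \ref{univ-comp} yields a function $h$ solving a constant coefficient, homogeneous, $(\lambda,\Lambda)$-elliptic equation in $B_{1/2}$ with $\|u-h\|_{L^\infty(B_{1/2})}\le \delta$. In particular $\|h\|_{L^\infty(B_{1/2})} \le 1+\delta \le 2$, and universal regularity for homogeneous equations gives $h \in C^{1,\alpha_0}(B_{1/4})$ with $\|h\|_{C^{1,\alpha_0}(B_{1/4})}\le C^\star$.

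I would then set
\[
    \ell(X) \;:=\; h(0) + \nabla h(0)\cdot X, \qquad a := h(0),\quad \vec{b} := \nabla h(0),
\]
and assume (taking $\rho_0$ even smaller if needed) that $\rho_0 < 1/4$. By the Taylor remainder estimate,
\[
    |h(X)-\ell(X)| \;\le\; C^\star |X|^{1+\alpha_0} \qquad \text{for } X \in B_{\rho_0}.
\]
Combining this with the $\delta$-proximity between $u$ and $h$ and the choice of $\rho_0$,
\[
    \sup_{B_{\rho_0}} |u-\ell| \;\le\; \delta + C^\star \rho_0^{1+\alpha_0} \;\le\; \tfrac12\rho_0^{1+\alpha} + \tfrac12\rho_0^{1+\alpha} \;=\; \rho_0^{1+\alpha}.
\]
The bound $|a|+|\vec{b}| \le C(d,\lambda,\Lambda)$ is immediate from $\|h\|_{C^{1,\alpha_0}(B_{1/4})}\le C^\star$.

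\medskip
\noindent The main subtlety is choosing $\rho_0$ and $\epsilon_0$ in the correct order so that the universal constant $C^\star$ coming from the homogeneous theory does not depend on anything that has yet to be fixed; the constraint $\alpha \le (1+\gamma)^{-1}$ is not needed at this step but has been recorded because it is critical in the subsequent iteration (rescaling the corrected solution forces a factor $\rho_0^{\,1-\alpha(1+\gamma)}$ in front of the source term, which must stay bounded as the induction proceeds). Apart from this bookkeeping, the only thing to verify is that the constant-coefficient operator $\mathfrak{F}$ produced by Lemma \ref{univ-comp} is indeed $(\lambda,\Lambda)$-elliptic, so that $C^\star$ is genuinely universal; this is built into the statement of Lemma \ref{univ-comp}.
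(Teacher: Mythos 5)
Your proposal is correct and follows essentially the same argument as the paper: invoke the approximation Lemma \ref{univ-comp} with a $\delta$ to be fixed, use the universal $C^{1,\alpha_0}$ estimate and first-order Taylor remainder for the approximating homogeneous solution $h$, take $\ell$ to be the first-order Taylor polynomial of $h$ at the origin, and then choose $\rho_0$ and $\delta$ (in either order) so that each of the two error terms is bounded by $\tfrac12\rho_0^{1+\alpha}$, exploiting $\alpha<\alpha_0$. Your remark that the restriction $\alpha\le(1+\gamma)^{-1}$ plays no role here but is essential in the subsequent rescaling/iteration is also exactly right.
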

\begin{proof}
For a $\delta>0$ to be chosen {\it a posteriori}, let $h$ be a solution to a constant coefficient, homogeneous, $(\lambda, \Lambda)$-uniform elliptic equation that is $\delta$-close to $u$ in $L^\infty(B_{1/2})$. The existence of such a function is the thesis of Lemma \ref{univ-comp}, provided $\epsilon_0$ is chosen small enough, depending only on $\delta$ and universal parameters. Since our choice for $\delta$ - later in the proof - will depend only upon universal parameters, we will conclude that the choice of $\epsilon_0$ is too universal.

From normalization of $u$, it follows that $\|h\|_{L^\infty(B_{1/2})} \le 2$;  therefore, from the regularity theory available for $h$,  see for instance \cite{CC}, Chapters 4 and 5, we can estimate
\begin{eqnarray}
	\sup\limits_{B_r} |h(X) - \left( \nabla h(0) \cdot X + h(0) \right ) |& \le &C(d,\lambda, \Lambda) \cdot  r^{1+\alpha_0}  \quad \forall r>0, \label{uf-eq0} \\
 	|\nabla h(0)|   + |h(0)| &\le& C(d,\lambda, \Lambda), \label{uf-eq00}
\end{eqnarray}
for a universal constant $0<C(d,\lambda, \Lambda)$. Let us label 
\begin{equation} \label{uf-eq011}
	\ell(X) =  \nabla h(0) \cdot X + h(0).
\end{equation}
It readily follows from triangular inequality that 
\begin{equation} \label{uf-eq01}
		\sup\limits_{B_{\rho_0}} |u(X) - \ell(X)| \le  \delta + C(d,\lambda, \Lambda) \cdot \rho_0^{1+\alpha_0}.
\end{equation}
Now, fixed an exponent $\alpha < \alpha_0$, we select $\rho_0$ and $\delta$ as
\begin{eqnarray}	 
	\rho_0 &:=& \sqrt[\alpha_0 - \alpha]{ \frac{1}{2C(d,\lambda, \Lambda)}},  \label{uf-eq02} \\
	\delta &:=&  \dfrac{1}{2} \left ({ \frac{1}{2C(d,\lambda, \Lambda)}} \right )^{\frac{1+\alpha}{\alpha_0 - \alpha}},  \label{uf-eq03}
\end{eqnarray}
where $0< C(d,\lambda, \Lambda)$ is the universal constant appearing in \eqref{uf-eq0}. We highlight that the above choices depend only upon $d, \lambda, \Lambda$ and the fixed exponent $0< \alpha < \alpha_0$. Finally, combining \eqref{uf-eq0}, \eqref{uf-eq01}, \eqref{uf-eq02} and \eqref{uf-eq03}, we obtain
$$
	\begin{array}{lll} 
		\sup\limits_{B_{\rho_0}} |u(X) - \ell(X)| &\le&  \dfrac{1}{2} \left ({ \frac{1}{2C(d,\lambda, \Lambda)}} \right )^{\frac{1+\alpha}{\alpha_0 - \alpha}} + C(d, \lambda, \Lambda) \cdot \rho_0^{1+\alpha} \cdot  \rho_0^{ \alpha_0 - \alpha} \\
		& = &  \dfrac{1}{2} \rho_0^{1+\alpha}  + \dfrac{1}{2} \rho_0^{1+\alpha}  \\
		&=& \rho_0^{1+\alpha},
	\end{array}
$$
and the Lemma is proven.
\end{proof}

In the sequel, we shall iterate Lemma \ref{key 1} in appropriate dyadic balls as to obtain the precise sharp oscillation decay of the difference between $u$ and affine functions $\ell_k$. 
 \begin{lemma}\label{iterative}
Under the conditions of the previous lemma, there exists a sequence of affine functions $\ell_k(X):=a_{k}+\vec{b}_{k}\cdot X$ satisfying
\begin{equation}\label{coef-cond}
    |a_{k+1}-a_{k}| + \rho^{k}_{0}|\vec{b}_{k+1} - \vec{b}_{k}| \le C_0 \rho^{(1+\alpha)k}_{0},
\end{equation}
 such that
\begin{equation}\label{proof thm eq 01}
	\sup\limits_{B_{\rho_0^k}} | u(X) - \ell_k(X) | \le \rho_0^{k(1+\alpha)}.
\end{equation}
where $\alpha$ is a fixed exponent within the range
\begin{equation}\label{sharp}
    \alpha \in (0,\alpha_0) \cap \left(0,\frac{1}{1+\gamma}\right]
\end{equation}
and $C_0$ is a universal constant that depends only on dimension and ellipticity. 
\end{lemma}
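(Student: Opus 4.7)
The plan is to iterate Lemma \ref{key 1} in dyadically shrinking balls of radius $\rho_0^k$, exploiting crucially the fact that Lemma \ref{key 1} is stated for an \emph{arbitrary} translation vector $\vec{q}$. I would proceed by induction on $k$, with base case $k=0$ handled by setting $\ell_0\equiv 0$ (trivial from $\|u\|_\infty\le 1$), and the step $k=1$ being the direct output of Lemma \ref{key 1}.

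For the inductive step, assuming $\ell_0,\dots,\ell_k$ have already been built, I would rescale by introducing
$$v(X) := \frac{u(\rho_0^k X) - \ell_k(\rho_0^k X)}{\rho_0^{k(1+\alpha)}}, \qquad X \in B_1,$$
so that $\|v\|_{L^\infty(B_1)}\le 1$ by the inductive hypothesis \eqref{proof thm eq 01}. A direct computation of the first and second derivatives shows that $v$ is a viscosity solution of
$$|\vec{q}_k + \nabla v|^\gamma F_k(X, D^2 v) = f_k(X),$$
where $\vec{q}_k := \rho_0^{-k\alpha}(\vec{q} + \vec{b}_k)$ is the \emph{new} translation vector absorbing the accumulated drift, $F_k(X,M) := \rho_0^{k(1-\alpha)} F(\rho_0^k X,\rho_0^{k(\alpha-1)}M)$ remains $(\lambda,\Lambda)$-elliptic, and $f_k(X) := \rho_0^{k(1-\alpha(1+\gamma))}\, f(\rho_0^k X)$.

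The main delicate point is to verify that Lemma \ref{key 1} can legitimately be reapplied to $v$ at every step, with its smallness condition \eqref{small-condition-lemma} preserved. This is precisely the place where the sharp threshold $\alpha\le \tfrac{1}{1+\gamma}$ in the statement enters: it guarantees $1-\alpha(1+\gamma)\ge 0$, hence $\|f_k\|_\infty \le \|f\|_\infty \le \epsilon_0$, since $\rho_0<1$. For the operator side, the monotonicity of $\omega$ forces $\|M\|^{-1}\|F_k(X,M)-F_k(0,M)\|_{L^\infty(B_1)} \le \|F\|_\omega\cdot \omega(\rho_0^k)$, which stays below $\epsilon_0$. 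With the hypotheses of Lemma \ref{key 1} in place, I obtain an affine function $\tilde{\ell}(X)=\tilde{a}+\vec{\tilde{b}}\cdot X$ with $|\tilde{a}|+|\vec{\tilde{b}}|\le C_0$ and $\sup_{B_{\rho_0}}|v-\tilde{\ell}|\le \rho_0^{1+\alpha}$. Setting
$$\ell_{k+1}(X) := \ell_k(X) + \rho_0^{k(1+\alpha)}\,\tilde{\ell}(\rho_0^{-k}X)$$
and unwinding the rescaling immediately yields \eqref{proof thm eq 01} at level $k+1$. Finally, reading off $a_{k+1}-a_k = \rho_0^{k(1+\alpha)}\tilde{a}$ and $\vec{b}_{k+1}-\vec{b}_k = \rho_0^{k\alpha}\vec{\tilde{b}}$ produces the bound \eqref{coef-cond}, closing the induction. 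The flexibility of an arbitrary $\vec{q}$ in Lemma \ref{key 1} is what makes the scheme self-propagating: at each scale the translation vector acquires the inductively constructed drift $\vec{b}_k$ without leaving the regime in which the compactness Lemma \ref{univ-comp} applies.
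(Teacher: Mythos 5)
Your proof is correct and follows essentially the same route as the paper's: the same rescaled function $v$, the same operators $F_k$ and source $f_k$, the same observation that $\alpha\le\tfrac{1}{1+\gamma}$ is what keeps $\|f_k\|_\infty$ under the threshold $\epsilon_0$, and the same reapplication of Lemma \ref{key 1} with a drift vector shifted by $\vec b_k$. The only cosmetic differences are that you carry the original $\vec q$ explicitly (writing $\vec q_k=\rho_0^{-k\alpha}(\vec q+\vec b_k)$, which is slightly more scrupulous than the paper's $\rho_0^{-k\alpha}\vec b_k$) and start the induction at $k=0$ rather than $k=1$.
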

\begin{proof}
We argue by finite induction. The case $k=1$ is precisely the statement of Lemma \ref{key 1}. Suppose we have verified \eqref{proof thm eq 01} for $j = 1, 2, \cdots, k$. Define the rescaled function
$$
	v(X) := \dfrac{(u-\ell_k)(\rho_0^kX)}{\rho_0^{k(1+\alpha)}}.
$$
It readily follows from the induction assumption that $|v| \le 1$. Furthermore, $v$ satisfies
$$
    \left | \rho^{-k \alpha} b_{k} + \nabla v \right |^\gamma F_{k}( X,D^2v)=f_{k}(X),
$$
where 
$$
	f_{k}(X) = \rho^{k[1-\alpha(1+\gamma)]}_{0}f(\rho^{k}_{0}X)
$$ 
and
$$
    F_{k}(X,M):= \rho^{k(1-\alpha)}_{0}F \left(\rho^{k}_{0}X, \frac{1}{\rho^{k(1-\alpha)}_{0}}M\right).
$$
It is standard to verify that the operator $F_{k}$ is $(\lambda,\Lambda)$-elliptic. Also, the $\omega$-norm of the corresponding coefficient oscillation of $F_k$, as defined on \eqref{w-norm}, hereafter called $\beta_k$, does not increase. Also, one easily estimate
\begin{eqnarray}
	\|f_{k}\|_{L^{\infty}(B_1)} &\le& \rho^{k[1-\alpha(1+\gamma)]}_{0} \|f\|_{L^{\infty}(B_{\rho^k_{0}})}\label{(12)}.
\end{eqnarray}
Due to the sharpness of the exponent selection made in \eqref{sharp}, namely $\alpha \le \frac{1}{1+\gamma}$, we conclude $(F_k, f_k)$ satisfies the smallness assumption \eqref{small-condition-lemma}, from Lemma  \ref{key 1}.

We have shown that $v$ is under the hypotheses of Lemma \ref{key 1}, which assures the existence in the affine function $\tilde{\ell}(X):= {a}+\vec{b} \cdot X$ with $| {a}|+|\vec{b}| \le C(d, \lambda, \Lambda)$, such that
\begin{equation}\label{(13)}
    \sup_{B_{\rho_{0}}}|v(X)-\tilde{\ell}(X)| \le \rho^{1+\alpha}_{0}.
\end{equation}
In the sequel, we define the $(k+1)$th approximating affine function,  $\ell_{k+1}(X):= a_{k+1} + \vec{b}_{k+1} \cdot X$, where the coefficients are given by
$$
    a_{k+1}:= a_{k} + \rho^{(1+\alpha)k}_{0}{a} \quad \text{ and } \quad \vec{b}_{k+1}:=\vec{b}_{k} + \rho^{\alpha k}_{0} \vec{b}.
$$
Rescaling estimate \eqref{(13)} back, we obtain
$$
    \sup_{B_{\rho^{k+1}_{0}}}|u(X)-\ell_{k+1}(X)| \le \rho^{(k+1)(1+\alpha)}_{0}
$$
and  the proof of Lemma \ref{iterative} is complete.
\end{proof}

\section{Smallness regime} \label{sec small}

In this Section we comment on the scaling features of the equation that allow us to reduce the proof of Theorem \ref{Teo-Princ} to the hypotheses of Lemma \ref{key 1} and Lemma \ref{iterative}. 
\par

Let $v \in C(B_1)$ be a viscosity solution to 
$$
	\mathcal{H}(X, \nabla v) F(X, D^2v) = f(X),
$$
where $\mathcal{H}$ satisfies \eqref{(2)} and $F$ is a $(\lambda, \Lambda)$-elliptic operator with continuous coefficients, i.e., satisfying \eqref{continuity}. Fix a point $Y_0 \in B_{1/2}$ define $u\colon B_1 \to \mathbb{R}$ as 
$$
	u(X) := \frac{v(\eta X + Y_0)}{\tau},
$$ 
for parameters $\eta$ and $\tau$ to be determined. We readily check that $u$ solves
$$
	\mathcal{H}_{\eta, \tau}(X, \nabla u) F_{\eta, \tau} (X, D^2u) = f_{\eta, \tau}(X),
$$
where
\begin{eqnarray}
	F_{\eta, \tau} (X, M) &:=& \dfrac{\tau}{\eta^2} F\left ( \eta X +Y_0, \dfrac{\eta^2}{\tau}M \right ) \label{Fmod} \\
	\mathcal{H}_{\eta, \tau}(X, \p)  &:=&  \left ( \dfrac{\tau}{\eta} \right )^{\gamma} H\left (\eta X + Y_0,  \dfrac{\eta}{\tau} \p \right ) \label{Hmod} \\
	f_{\eta, \tau}(X) &:=& \dfrac{\eta^{\gamma + 2}}{\tau^{\gamma + 1}} f(\eta X + Y_0).
\end{eqnarray}
Easily one verifies that $F_{\eta, \tau} $ is uniformly elliptic with the same ellipticity constants as the original operator $F$, i.e, it is another $(\lambda, \Lambda)$-elliptic operator. Also $\mathcal{H}_{\eta, \tau}$ satisfies the degeneracy condition \eqref{(2)}, with the same constants. Let us choose
$$
	\tau := \max \left \{ 1, \|v\|_{L^\infty(B_1)} \right \},
$$ 
thus, $|u|\le 1$ in $B_{1}$. Now, for the universal $\epsilon_0$ appearing in the statement of Lemma \ref{key 1}, choose
$$
	\eta := \min \left \{ 1, \lambda \cdot (\epsilon_0 \|f\|_{L^\infty}^{-1})^{\frac{1}{\gamma+2}}, \omega^{-1} \left (\frac{\epsilon_0}{C} \right ) \right \}.
$$
With these choices, $u$ is under the assumptions of Lemma \ref{key 1}. 

The above reasoning certifies that in order to show Theorem \ref{Teo-Princ}, it is enough to work under the smallness regime requested in the statement of Lemma \ref{key 1}. Once established the desired optimal regularity estimate the normalized function $u$, the corresponding estimate for $v$ follows readily.

\section{Sharp local regularity} \label{sec fim}

In this Section we conclude the proof of Theorem \ref{Teo-Princ}. From the conclusions delivered in Section \ref{sec small}, it suffices to show the aimed $C^{1,\alpha}$ estimate at the origin for a solution $u$ under the hypotheses of Lemma \ref{key 1} and Lemma \ref{iterative}.  For a fixed exponent $\alpha$ satisfying the sharp condition \eqref{sharp}, we will establish the existence of an affine function 
$$
	\ell_\star(X) := a_\star +\vec{b}_\star \cdot X,
$$
such that
$$
	|\vec{b}_\star| + |a_\star| \le C,
$$
and 
$$
	\sup\limits_{B_r} \left | u(X) - \ell_\star(X) \right | \le C r^{1+\alpha}, \quad \forall r \ll 1,
$$
for a constant $C$ that depends only on $d$, $\lambda$, $\Lambda$, $\gamma$ and $\alpha$. 

\par

\medskip

Initially, we notice that it follows from \eqref{coef-cond} that the coefficients  of the sequence of affine functions $\ell_k$ generated in Lemma \ref{iterative}, namely  $\vec{b}_k$ and $a_k $, are Cauchy sequences in $\mathbb{R}^d$ and in $\mathbb{R}$, respectively. Let $\vec{b}_\star$ and $a_\star$ be the limiting coefficients, i.e., 
\begin{eqnarray} \label{f-p eq01}
	\lim\limits_{k\to \infty} \vec{b}_k &=:& \vec{b}_\star \in \mathbb{R}^d  \\
	 \label{f-p eq02}\lim\limits_{k\to \infty} a_k &=: & a_\star \in \mathbb{R}.
\end{eqnarray}
It also follows from the estimate obtained in \eqref{coef-cond} that
\begin{eqnarray}
	|a_\star - a_k| &\le&  \dfrac{C_0}{1-\rho_0}  \rho_0^{k(1+\alpha)},  \label{f-p eq03} \\
	|\vec{b}_\star - \vec{b}_k|  &\le&  \dfrac{C_0}{1-\rho_0}  \rho_0^{k\alpha}.  \label{f-p eq04}
\end{eqnarray}

\par

\medskip

Now, fixed a $0< r < \rho_0$, we choose $k \in \mathbb{N}$ such that
$$
	\rho_0^{k+1} < r \le \rho_0^k.
$$

We estimate
$$
	\begin{array}{lll}
		\sup\limits_{B_r} \left | u(X) - \ell_\star(X) \right | &\le& \sup\limits_{B_{\rho^k_0}} \left | u(X) - \ell_\star(X) \right | \\
		&\le &  \sup\limits_{B_{\rho^k_0}} \left | u(X) - \ell_k (X) \right | +  \sup\limits_{B_{\rho^k_0}} \left | \ell_k(X) - \ell_\star(X) \right | \\
		&\le & \rho_0^{k(1+\alpha)} + \dfrac{C_0}{1-\rho_0}  \rho_0^{k(1+\alpha)} \\
		&\le & \dfrac{1}{\rho_0^{1+\alpha}} \left [1 +  \dfrac{C_0}{1-\rho_0} \right ] \cdot r^{1+\alpha},
	\end{array}
$$
and the proof of Theorem \ref{Teo-Princ} is finally complete. \hfill $\square$

\section*{Acknowledgement} 

The authors would like to thank Luis Silvestre for insightful and stimulating comments and suggestions that benefited a lot the final presentation of this article.  This work has been partially funded by CNPq-Brazil.

\vspace{1cm}
\noindent \textsc{Dami\~ao Ara\'ujo} \hfill \textsc{Gleydson C. Ricarte}\\
Universidade Federal Cear\'a\hfill  Universidade Federal Cear\'a \\
Department of Mathematics \hfill Department of Mathematics \\
Fortaleza, CE-Brazil 60455-760 \hfill Fortaleza, CE-Brazil 60455-760\\
\texttt{djunio@mat.ufc.br} \hfill
\texttt{gleydsoncr@ufc.br}

\vspace{1cm}
\noindent \textsc{Eduardo V. Teixeira}\\
Universidade Federal Cear\'a \\
Department of Mathematics  \\
Fortaleza, CE-Brazil 60455-760 \\
\texttt{teixeira@mat.ufc.br}
\end{document}